\documentclass[reqno]{amsart}

\usepackage{graphicx}
\usepackage{amscd}
\usepackage{amsmath, amssymb}
\usepackage{graphics}
\usepackage{graphicx}
\usepackage{epsfig}
\usepackage{xcolor}
\usepackage{soul}
\usepackage{tikz}

\newtheorem{theorem}{Theorem}

\newtheorem{corollary}[theorem]{Corollary}

\newtheorem{definition}[theorem]{Definition}

\newtheorem{lemma}[theorem]{Lemma}

\newtheorem{proposition}[theorem]{Proposition}
\newtheorem{remark}[theorem]{Remark}

\newtheorem{question}[theorem]{Question}

\DeclareMathOperator{\id}{Id}

\begin{document}

\title[]{On the structure of the singular triplet monoid and its virtual extension}

\author{Carmen Caprau}
\address{Carmen Caprau\\
               Department of Mathematics\\
                California State University, Fresno\\
                5245 N. Backer Ave. M/S PB 108\\
                Fresno, ca 93740, USA}

\email{ccaprau@csufresno.edu}

\author{Mohamad N. Nasser}
\address{Mohamad N. Nasser\\
         Department of Mathematics and Computer Science\\
         Beirut Arab University\\
         P.O. Box 11-5020, Beirut, Lebanon}
         
\email{m.nasser@bau.edu.lb}

\maketitle
 
\begin{abstract}
In this article, we introduce two new algebraic structures associated with the triplet group on $n$ strands, $L_n$: the singular triplet monoid $SLM_n$ and its virtual extension $VSLM_n$, defined in analogy with the singular braid monoid and the virtual singular braid monoid. We begin by presenting these monoids in terms of generators and relations, and then derive several alternative presentations of $VSLM_n$. Second, we investigate the problem of extending representations of $L_n$ to these monoids. Two extension methods are developed: the $k$-local type extension, which applies to $k$-local representations, and the $\Phi$-type extension, which applies to representations satisfying suitable commutativity conditions. We show that every $2$-local representation of $L_n$ admits extensions to both $SLM_n$ and $VSLM_n$ via the two methods. As an application, we consider a specific representation $\mu : L_n \longrightarrow \mathrm{GL}_n(\mathbb{Z}[t^{\pm1}])$ introduced recently by Nasser et al. We explicitly determine all homogeneous $2$-local extensions of $\mu$ to $SLM_n$ and $VSLM_n$, and compute the corresponding $\Phi$-type extensions. Furthermore, we compare these two extension methods, showing that they coincide for $SLM_n$ under suitable parameter conditions, while they do not coincide for $VSLM_n$. These results provide a systematic framework for extending representations of $L_n$ to $SLM_n$ and $VSLM_n$.
\end{abstract}

\renewcommand{\thefootnote}{}
\footnote{\textit{Keywords and Phrases.} Braid Group; Triplet Group; Singular Triplet Monoid; Virtual Singular Triplet Monoid; Representations.}
\footnote{\textit{Mathematics Subject Classification.} 20F36.}

\vspace*{0.2cm}

\section{Introduction}

Coxeter groups form a fundamental and widely studied class of groups in both algebra and geometry, naturally arising from reflections, symmetry considerations, and combinatorial patterns. Formally, a Coxeter group $\mathcal{C}$ can be presented as
\[
\mathcal{C} = \langle c_1, c_2, \dots, c_s \mid c_i^2 = 1, \ (c_i c_j)^{m_{ij}} = 1, \ 1 \le i,j \le s \rangle,
\]
where $m_{ij} = 1$ if $i=j$ and $m_{ij} \ge 2$ when $i \neq j$. A classical example is the symmetric group on $n$ elements, denoted by $S_n$, whose elements represent all possible permutations of $n$ objects, and whose Coxeter presentation is given by involutions $\alpha_1, \alpha_2, \dots, \alpha_{n-1}$ satisfying standard braid-like and commutation relations.

Beyond the symmetric group $S_n$, several generalizations incorporating additional combinatorial features were introduced in \cite{MKhovanov1996,Khovanov1997}. One such example is the twin group on $n$ strands denoted by $T_n$. It is generated by elements $s_1,s_2,\ldots,s_{n-1}$ satisfying the relations $s_i^2=1$ for all $1\leq i\leq n-1$ and $s_is_j=s_js_i$ whenever $|i-j|\geq2$. Another related construction is the triplet group on $n$ strands, denoted by $L_n$, which is generated by elements $\ell_1,\ell_2,\ldots,\ell_{n-1}$ subject to the relations $\ell_i^2=1$ for all $1\le i\le n-1$ and $\ell_i\ell_{i+1}\ell_i=\ell_{i+1}\ell_i\ell_{i+1}$ for all $1\le i\le n-2$. These algebraic structures also admit natural topological interpretations. Twin groups are closely related to doodles on the $2$–sphere, which consist of finite collections of simple closed curves on the sphere with no triple or higher–order intersections. In a similar spirit, triplet groups are associated with topological objects known as noodles. Fixing a codimension–one foliation with singularities on the $2$–sphere, a noodle is defined as a collection of closed curves such that no two intersection points lie on the same leaf of the foliation, quadruple intersections are excluded, and no intersection point coincides with a singular point of the foliation.

The braid group on $n$ strands, $B_n$, originally introduced in \cite{EArtin1926, EArtin1947}, occupies a central role in both algebra and topology. Generated by the elementary braids $\sigma_1, \sigma_2, \dots, \sigma_{n-1}$, $B_n$ models the ways in which $n$ strands can intertwine without breaking. Its algebraic study, particularly through linear representations, has deep connections to knot theory, mapping class groups, and configuration spaces. Braid groups are closely linked to Coxeter groups: $B_n$ can be viewed as a “braid-like” lift of the symmetric group $S_n$, with the latter appearing as a quotient under the natural projection $\sigma_i \longmapsto (i\ \ i+1)$. Nevertheless, $B_n$ differs fundamentally from Coxeter groups due to its non-involutive generators and richer topological structure, which enables its broad applications in physics, cryptography, and quantum invariants.

Over the years, several extensions of the braid group $B_n$ have been introduced to model more subtle topological phenomena. Notable examples include the singular braid monoid $SM_n$ and the singular braid group $SB_n$ \cite{baezz, birman, Fenn}, as well as their virtual counterparts, the virtual singular braid monoid $VSM_n$ and the virtual singular braid group $VSB_n$ \cite{Caprau, Caprau1, Caprau2}, which account for crossings with singularities or virtual interactions. In a similar spirit, the twin group $T_n$ has been extended to incorporate singular and virtual singular versions, denoted $STM_n$ and $ST_n$ for the singular case, and $VSTM_n$ and $VST_n$ for the virtual singular case \cite{NasserNafaa, CaprauNasser}. These constructions illustrate the importance of studying representations beyond the classical groups. This naturally leads to the question: Can analogous singular and virtual extensions be constructed for the triplet group $L_n$? Developing such extensions and their representations is the primary motivation for our work. 

In this work, we focus on the triplet group $L_n$ and investigate the construction of several related extensions, namely the singular triplet monoid $SLM_n$, the singular triplet group $SL_n$, and their virtual extensions $VSLM_n$ and $VSL_n$. The main objective here is to develop several presentations for these structures and to investigate the problem of extending representations of $L_n$ to them. In particular, we introduce systematic methods for constructing such extensions, thereby providing a unified framework that relates $L_n$ to its extensions, particularly $SLM_n$ and $VSLM_n$.

Our results are as follows. In Sections 2 and 3, we introduce the singular triplet monoid $SLM_n$ and its virtual extension $VSLM_n$, together with their corresponding groups $SL_n$ and $VSL_n$. In addition, we derive several alternative presentations for the virtual singular triplet monoid $VSLM_n$. In Section 4, we investigate the problem of extending representations of $L_n$ to $SLM_n$ and $VSLM_n$. More deeply, we discuss two types of extension: the $k$-local extension and the $\Phi$-type extension. In Section 5, we extend a particular representation $\mu$ of $L_n$ to $SLM_n$ and $VSLM_n$ using both extension methods and study the relationships between the resulting representations. Finally, in Section 6, we present several directions for future research aimed at further developing and generalizing this approach.

\section{The Singular Triplet Monoid and Its Virtual Extension}

\subsection{The Braid Group and Its Singular Extensions}

To introduce the singular triplet monoid and its virtual extension, we begin by recalling the fundamental definitions of the braid group, the singular braid monoid, and the virtual singular braid monoid. These algebraic structures play a central role in braid theory and provide the foundational framework for the developments that follow.

\begin{definition} \cite{EArtin1926, EArtin1947}
The braid group on $n$ strands, denoted by $B_n$, is the group generated by the elements $
\sigma_1, \sigma_2, \ldots, \sigma_{n-1}$
subject to the relations
\begin{align}
\sigma_i \sigma_{i+1} \sigma_i 
&= \sigma_{i+1} \sigma_i \sigma_{i+1} 
& 1\leq i \leq n-2, \label{eqs1} \\
\sigma_i \sigma_j 
&= \sigma_j \sigma_i 
& |i-j| \geq 2. \label{eqs2}
\end{align}
\noindent The elements $\sigma_i$ are referred to as the braid generators.
\end{definition}

\begin{definition} \cite{baezz, birman}
The singular braid monoid on $n$ strands, denoted by $SM_n$, is the monoid generated by the nonsingular elements $\sigma_1, \sigma_2, \ldots, \sigma_{n-1}$ and the singular elements $\tau_1, \tau_2, \ldots, \tau_{n-1}$. In addition to the relations \eqref{eqs1}--\eqref{eqs2}, the generators of $SM_n$ satisfy the relations
\begin{align}
\tau_i\tau_j &= \tau_j\tau_i 
& |i-j|\geq 2, \label{eqs3}\\
\tau_i\sigma_j &= \sigma_j\tau_i 
& |i-j|\geq 2, \label{eqs4}\\
\tau_i\sigma_i &= \sigma_i\tau_i 
& 1\leq i \leq n-1, \label{eqs5}\\
\sigma_i\sigma_{i+1}\tau_i &= \tau_{i+1}\sigma_i\sigma_{i+1} 
& 1\leq i \leq n-2, \label{eqs6}\\
\sigma_{i+1}\sigma_i\tau_{i+1} &= \tau_i\sigma_{i+1}\sigma_i 
& 1\leq i \leq n-2. \label{eqs7}
\end{align}
\noindent The elements $\tau_i$ are referred to as the singular generators.
\end{definition}
 \noindent If, in addition, the generators $\tau_i$ are assumed to be invertible (i.e., they are nonsingular) for all $1 \leq i \leq n-1$, then one obtains a group that extends the braid group $B_n$. This group is called the singular braid group and is denoted by $SB_n$~\cite{Fenn}.

\begin{definition} \cite{Caprau, Caprau1, Caprau2}
The virtual singular braid monoid on $n$ strands, denoted by $VSM_n$, is the monoid generated by the nonsingular elements $\sigma_1, \sigma_2, \ldots, \sigma_{n-1}$, the singular elements $\tau_1, \tau_2, \ldots, \tau_{n-1}$, and the nonsingular elements $\nu_1, \nu_2, \ldots, \nu_{n-1}$. In addition to the relations \eqref{eqs1}--\eqref{eqs7}, the generators of $VSM_n$ satisfy the relations
\begin{align}
\nu_i^2 &= 1 
& 1\leq i \leq n-1, \label{eqs8}\\
\nu_i\nu_{i+1}\nu_i &= \nu_{i+1}\nu_i\nu_{i+1}
& 1\leq i \leq n-2, \label{eqs9}\\
\nu_i\sigma_{i+1}\nu_i &= \nu_{i+1}\sigma_i\nu_{i+1} 
& 1\leq i \leq n-2, \label{eqs10}\\
\nu_i\tau_{i+1}\nu_i &= \nu_{i+1}\tau_i\nu_{i+1}
& 1\leq i \leq n-2, \label{eqs11}\\
\nu_i\nu_j &= \nu_j\nu_i 
& |i-j|\geq 2, \label{eqs12}\\
\nu_i\sigma_j &= \sigma_j\nu_i 
& |i-j|\geq 2, \label{eqs13}\\
\nu_i\tau_j &= \tau_j\nu_i 
& |i-j|\geq 2. \label{eqs14}
\end{align}
\noindent The elements $\nu_i$ are referred to as the virtual generators. 
\end{definition}

\noindent Similarly, if the generators $\tau_i$ are additionally assumed to be invertible (nonsingular) for all $1 \leq i \leq n-1$, then the resulting group extends both the braid group $B_n$ and the singular braid group $SB_n$. This group is called the virtual singular braid group and is denoted by $VSB_n$ \cite{Caprau2}.

\subsection{The Triplet Group and Its Singular Extensions}

We now introduce the triplet group, together with two related new algebraic structures: the singular triplet monoid and its virtual extension, the virtual singular triplet monoid. These two monoids are analogous, in their algebraic constructions, to the singular braid monoid and the virtual singular braid monoid.

\begin{definition} \cite{MKhovanov1996,Khovanov1997}
The triplet group on $n$ strands, denoted by $L_n$, is the group generated by the elements $
\ell_1, \ell_2, \ldots, \ell_{n-1}$
subject to the relations
\begin{align}
\ell_i^2 &= 1 
& 1\leq i \leq n-1, \label{eqs15} \\
\ell_i \ell_{i+1} \ell_i &= \ell_{i+1} \ell_i \ell_{i+1} 
& 1\leq i \leq n-2. \label{eqs16}
\end{align}
\noindent The elements $\ell_i$ are referred to as the triplet generators.
\end{definition}

\begin{definition} 
The singular triplet monoid on $n$ strands, denoted by $SLM_n$, is the monoid generated by the nonsingular elements $\ell_1, \ell_2, \ldots, \ell_{n-1}$ and the singular elements $\tau_1, \tau_2, \ldots, \tau_{n-1}$. In addition to the relations \eqref{eqs15}--\eqref{eqs16}, the generators of $SLM_n$ are subject to the relations 
\begin{align}
\tau_i\tau_j &= \tau_j\tau_i 
& |i-j|\geq 2, \label{eqs17}\\
\tau_i\ell_j &= \ell_j\tau_i 
& |i-j|\geq 2, \label{eqs18}\\
\tau_i\ell_i &= \ell_i\tau_i 
& 1\leq i \leq n-1, \label{eqs19}\\
\ell_i\ell_{i+1}\tau_i &= \tau_{i+1}\ell_i\ell_{i+1} 
& 1\leq i \leq n-2, \label{eqs20}\\
\ell_{i+1}\ell_i\tau_{i+1} &= \tau_i\ell_{i+1}\ell_i 
& 1\leq i \leq n-2. \label{eqs21}
\end{align}
\end{definition}

 \noindent If, in addition, the generators $\tau_i$ are assumed to be invertible (nonsingular)  for all $1 \leq i \leq n-1$, then the resulting group extends the triplet group $L_n$. We call this group the \textit{singular triplet group} and we denote it by $SL_n$.

\begin{remark} \label{RK6}
The two relations \eqref{eqs20} and \eqref{eqs21} of $SLM_n$ are equivalent, since $\ell_i^2 = 1$ for all $1 \leq i \leq n-1$. Moreover, these relations are equivalent to the following relation:
\[ \ell_i \tau_{i+1} \ell_i = \ell_{i+1} \tau_i \ell_i, \quad 1\leq i \leq n-2.\]
\end{remark}

\begin{definition} \label{monoid-original_presentation}
The virtual singular triplet monoid on $n$ strands, denoted by $VSLM_n$, is the monoid generated by the nonsingular elements $\ell_1, \ell_2, \ldots, \ell_{n-1}$, the singular elements $\tau_1, \tau_2, \ldots, \tau_{n-1}$, and the nonsingular virtual elements $\nu_1, \nu_2, \ldots, \nu_{n-1}$. In addition to the relations \eqref{eqs15}--\eqref{eqs21}, the generators of $VSLM_n$ are subject to the relations
\begin{align}
\nu_i^2 &= 1 
& 1\leq i \leq n-1, \label{eqs22}\\
\nu_i\nu_{i+1}\nu_i &= \nu_{i+1}\nu_i\nu_{i+1} 
& 1\leq i \leq n-2, \label{eqs23}\\
\nu_i\ell_{i+1}\nu_i &= \nu_{i+1}\ell_i\nu_{i+1} 
& 1\leq i \leq n-2, \label{eqs24}\\
\nu_i\tau_{i+1}\nu_i &= \nu_{i+1}\tau_i\nu_{i+1}
& 1\leq i \leq n-2, \label{eqs25}\\
\nu_i\nu_j &= \nu_j\nu_i 
& |i-j|\geq 2, \label{eqs26}\\
\nu_i\ell_j &= \ell_j\nu_i 
& |i-j|\geq 2, \label{eqs27}\\
\nu_i\tau_j &= \tau_j\nu_i 
& |i-j|\geq 2. \label{eqs28}
\end{align}
\end{definition}
 \noindent Again, if we have, in addition, that the generators $\tau_i$ are invertible (nonsingular) for all $1 \leq i \leq n-1$, we then obtain a group that extends both the triplet group $L_n$ and the singular triplet group $SL_n$. We call this group the \textit{virtual singular triplet group} and we denote it by $VSL_n$.

\subsection{Alternative Presentations for $VSLM_n$.}

There is a reduced presentation for the virtual singular triplet monoid using fewer generators, namely $$\{\ell_1, \tau_1,  \nu_1, \nu_2, \dots, \nu_{n-1} \}.$$ For this, we introduce the following defining relations for $\ell_{i+1}$ and $\tau_{i+1}$, for all $1\leq i \leq n-2$:
\begin{eqnarray}
\ell_{i+1}  :=  (\nu_i \ldots \nu_2 \nu_1)(\nu_{i+1}\ldots \nu_3 \nu_2)\ell_1 (\nu_2 \nu_3 \ldots \nu_{i+1})(\nu_1 \nu_2 \ldots \nu_i),  \label{def_rel_ell} \\ 
\tau_{i+1}  :=  (\nu_i \ldots \nu_2 \nu_1)(\nu_{i+1}\ldots \nu_3 \nu_2)\tau_1 (\nu_2 \nu_3 \ldots \nu_{i+1})(\nu_1 \nu_2 \ldots \nu_i). \label{def_rel_tau}  
\end{eqnarray}
For example, $\ell_2 := \nu_1\nu_2 \ell_1 \nu_2 \nu_1$ and $\ell_3 :=  (\nu_2 \nu_1)(\nu_3 \nu_2) \ell_1 (\nu_2 \nu_3) (\nu_1 \nu_2)$. Similarly,
$\tau_2 := \nu_1\nu_2 \tau_1 \nu_2 \nu_1$ and $\tau_3 :=  (\nu_2 \nu_1)(\nu_3 \nu_2) \tau_1 (\nu_2 \nu_3) (\nu_1 \nu_2)$.

From the original presentation, we keep all of the relations involving only the virtual generators $\nu_i$. For the relations involving the singular and nonsingular elements, it suffices to impose only the relations that occur on the left of a braid (e.q. relations describing interactions among the first strands of a braid). The relations \eqref{eqs24} and \eqref{eqs25} are not needed in the reduced presentation since they were used in the defining relations \eqref{def_rel_ell} and  \eqref{def_rel_tau}.

\begin{theorem} \label{thm:reduced}
The virtual singular triplet monoid $VSLM_n$ has the following reduced presentation with generators $\{\ell_1, \tau_1, \nu_1, \nu_2, \dots, \nu_{n-1} \}$
and relations:
\begin{enumerate}
\item [(8.1)] $\nu_i^2 = 1$ for all $1\leq i \leq n-1$,
\item [(8.2)] $\nu_i \nu_j \nu_i = \nu_j \nu_i \nu_j$, where $|i-j| = 1$,
\item [(8.3)] $\nu_i \nu_j = \nu_j \nu_i$, where $|i-j| \geq 2$,
\item [(8.4)] $\ell_1^2 = 1$,
\item [(8.5)] $\ell_1\tau_1 =  \tau_1 \ell_1$,
\item [(8.6)] $\ell_1 ( \nu_1 \nu_2 \ell_1 \nu_2 \nu_1 ) \ell_1 = (\nu_1 \nu_2 \ell_1 \nu_2 \nu_1 ) \ell_1(\nu_1 \nu_2 \ell_1 \nu_2 \nu_1 ),$  
\item [(8.7)] $\ell_1 ( \nu_1 \nu_2 \tau_1 \nu_2 \nu_1 ) \ell_1 = (\nu_1 \nu_2 \tau_1 \nu_2 \nu_1 ) \ell_1(\nu_1 \nu_2 \tau_1 \nu_2 \nu_1 ),$ 
\item [(8.8)] $\ell_1 \nu_i = \nu_i \ell_1$ and $\tau_1 \nu_i = \nu_i \tau_1$ for all $i \geq 3$, 
\item [(8.9)] $\tau_1 (\nu_2  \nu_1 \nu_3 \nu_2 \tau_1 \nu_2 \nu_3 \nu_1 \nu_2) = (\nu_2 \nu_1 \nu_3 \nu_2 \tau_1 \nu_2 \nu_3 \nu_1 \nu_2) \tau_1$, 
\item [(8.10)] $\tau_1  (\nu_2  \nu_1 \nu_3 \nu_2 \ell_1 \nu_2 \nu_3 \nu_1 \nu_2)   =  (\nu_2  \nu_1 \nu_3 \nu_2 \ell_1 \nu_2 \nu_3 \nu_1 \nu_2)  \tau_1$. 
\end{enumerate}
\end{theorem}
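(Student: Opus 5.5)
The plan is to give a Tietze-transformation argument: two mutually inverse monoid homomorphisms between $VSLM_n$ (with the presentation of Definition \ref{monoid-original_presentation}) and the monoid $R$ defined by generators $\{\ell_1,\tau_1,\nu_1,\dots,\nu_{n-1}\}$ and relations (8.1)--(8.10).

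\textbf{Step 1: the map $\psi\colon R\to VSLM_n$.} Send each generator of $R$ to the element of the same name. We must check that (8.1)--(8.10) hold in $VSLM_n$.
\begin{itemize}
\item Relations (8.1)--(8.5) are instances of \eqref{eqs22}, \eqref{eqs23}, \eqref{eqs26}, \eqref{eqs15} and \eqref{eqs19}.
\item Relation (8.8) is \eqref{eqs27} and \eqref{eqs28} with $i\ge 3$, $j=1$.
\item For (8.6) and (8.7), first show that $\nu_1\nu_2 x_1\nu_2\nu_1 = x_2$ for $x\in\{\ell,\tau\}$. Using $\nu_i^2=1$, relations \eqref{eqs24}--\eqref{eqs25} read $\nu_{1}\nu_2 x_1 \nu_2\nu_1 = \nu_2\nu_1\nu_1 x_{2}\nu_1\nu_1\nu_2$ up to rearrangement. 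More directly, \eqref{eqs24} gives $x_2=\nu_1\nu_2x_1\nu_2\nu_1$. Then (8.6) is \eqref{eqs16} with $i=1$, and (8.7) is Remark \ref{RK6} with $i=1$ rewritten via \eqref{eqs15}. We need to double-check the exact form here: (8.7) as written is $\ell_1\tau_2\ell_1=\tau_2\ell_1\tau_2$, which does not match \eqref{eqs20} ($\ell_1\ell_2\tau_1\ell_2\ell_1=\tau_2$). This discrepancy must be reconciled, probably as a typo for $\ell_1\ell_2\tau_1=\tau_2\ell_1\ell_2$ written in $\nu$'s.
\item For (8.9) and (8.10), compute that the conjugating word $\nu_2\nu_1\nu_3\nu_2$ sends $x_1$ to $x_3$, by iterating \eqref{eqs24}/\eqref{eqs25} and \eqref{eqs27}/\eqref{eqs28}. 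Then (8.9) and (8.10) become \eqref{eqs17} and \eqref{eqs18} for the pair $(1,3)$.
\end{itemize}

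\textbf{Step 2: the map $\phi\colon VSLM_n\to R$.} Send $\nu_i\mapsto\nu_i$, and send $\ell_{i+1}$ and $\tau_{i+1}$ to the words \eqref{def_rel_ell} and \eqref{def_rel_tau}. We must verify every relation \eqref{eqs15}--\eqref{eqs28} in $R$.

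The key lemma is a conjugation formula in $R$:
\[\nu_i x_i\nu_i \text{ is sent to } \nu_{i+1}x_{i+1}\nu_{i+1}\quad\text{and}\quad \nu_j x_i=x_i\nu_j\ \ (j\ne i-1,i,i+1).\]
More precisely, let $w_i=(\nu_i\cdots\nu_1)(\nu_{i+1}\cdots\nu_2)$. This is the permutation sending strands $\{1,2\}$ to $\{i+1,i+2\}$ in order. We show that:
\begin{itemize}
\item $\nu_i w_{i}$ and $w_{i-1}$ differ on the right by an element commuting with $x_1$, using (8.1)--(8.3) and (8.8) only;
\item $\nu_j$ for $|j-(i+1)|\ge2$ satisfies $\nu_j w_i = w_i\nu_{j'}$ with $j'\ge3$.
\end{itemize}
The argument takes place inside $S_n=\langle\nu_i\rangle$: $w_i^{-1}\nu_j w_i$ is a transposition disjoint from $\{1,2\}$. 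When it is not adjacent we need more than (8.8), but it lies in $\langle\nu_3,\dots\rangle$, which commutes with $x_1$. With these facts:
\begin{itemize}
\item \eqref{eqs24}, \eqref{eqs25}, \eqref{eqs27} and \eqref{eqs28} follow.
\item \eqref{eqs15}, \eqref{eqs16}, \eqref{eqs19}, \eqref{eqs20} and \eqref{eqs21} reduce by conjugation to the case $i=1$, that is, to (8.4)--(8.7).
\item \eqref{eqs17} and \eqref{eqs18} for arbitrary $|i-j|\ge2$ reduce by conjugation by a permutation sending $\{i,i+1,j,j+1\}$ to $\{1,2,3,4\}$ in order. This gives the pair $(1,3)$, namely (8.9) and (8.10), together with the reverse ordering obtained symmetrically. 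The $\ell\ell$ commutation for $|i-j|\ge2$ is not listed among the relations, since $L_n$ has none.
\end{itemize}

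\textbf{Step 3.} The composites $\phi\psi$ and $\psi\phi$ are the identity on generators, using the identity $x_{i+1}=w_i x_1 w_i^{-1}$ in $VSLM_n$ established in Step 1. Hence the two presentations are equivalent.

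\textbf{Main obstacle.} The hardest part is the permutation bookkeeping in Step 2. One must show that conjugating $x_1$ by any word in the $\nu$'s depends only on the image of the ordered pair $(1,2)$, given (8.8). This holds because the stabilizer of $1$ and $2$ in $S_n$ is generated by $\nu_3,\dots,\nu_{n-1}$. That fact makes all the reductions above uniform.
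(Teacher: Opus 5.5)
Your suspicion about (8.7) is right, and it can be made definitive. As printed, (8.7) reads $\ell_1\tau_2\ell_1=\tau_2\ell_1\tau_2$, and this is false in $VSLM_n$. Take the assignment $\ell_i,\nu_i\mapsto(i\ i+1)$, $\tau_i\mapsto 1$, a variant of $\pi$. It respects \eqref{eqs15}--\eqref{eqs28}, so it defines a homomorphism $VSLM_n\to S_n$. It sends the left side of (8.7) to $1$ and the right side to $(1\ 2)$.

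So the theorem can only hold with (8.7) replaced by $\ell_1(\nu_1\nu_2\ell_1\nu_2\nu_1)\tau_1=(\nu_1\nu_2\tau_1\nu_2\nu_1)\ell_1(\nu_1\nu_2\ell_1\nu_2\nu_1)$, i.e.\ $\ell_1\ell_2\tau_1=\tau_2\ell_1\ell_2$. The analogous relation in Proposition~\ref{prop:redvssn} does have this $\mu\mu\gamma$ form. The paper's proof does not catch the error because it argues only one direction: that each relation of Definition~\ref{monoid-original_presentation} follows from the reduced ones. It also handles \eqref{eqs20} by pointing to a twin-group computation. Your Step~1, which checks that (8.1)--(8.10) actually hold in $VSLM_n$, is exactly what exposes the misprint.

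With that correction your argument is sound, and it takes a genuinely different route from the paper. The paper verifies relations by explicit word manipulation. In (f) it slides $\nu_j$ through $(\nu_{i-1}\cdots\nu_1)(\nu_i\cdots\nu_2)$ one braid move at a time. In (g) it expands both sides of \eqref{eqs16} and cancels. For \eqref{eqs17}, \eqref{eqs18}, \eqref{eqs20} it defers to the corresponding twin-group lemmas.

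You replace all of this with one structural fact. Relations (8.1)--(8.3) form the Coxeter presentation of $S_n$, and the pointwise stabilizer of $\{1,2\}$ is generated by $\nu_3,\dots,\nu_{n-1}$, which commute with $\ell_1,\tau_1$ by (8.8). Hence the conjugate $ux_1u^{-1}$ depends only on where the image $\bar u\in S_n$ sends the ordered pair $(1,2)$. Every index shift then becomes a one-line permutation check. This includes both orderings in \eqref{eqs18}. It also includes the identity $x_{i+1}=w_ix_1w_i^{-1}$ in $VSLM_n$ for all $i$, which Step~3 needs. Step~1 only covers $i=1,2$, but the same principle holds in $VSLM_n$. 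Induction on \eqref{eqs24}, \eqref{eqs25} then finishes it, since $\nu_i\nu_{i+1}w_{i-1}$ and $w_i$ both send $(1,2)$ to $(i+1,i+2)$. The paper's computations are more explicit; yours are shorter and uniform, and they also establish the converse direction.

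There are two small slips. First, your displayed key lemma and first bullet are misstated. For \eqref{eqs24}, \eqref{eqs25} you need that $\nu_iw_i$ and $\nu_{i+1}w_{i-1}$ agree on $(1,2)$; both give $(i,i+2)$. By contrast, $w_{i-1}$ alone gives $(i,i+1)$. Second, your non-adjacency worry is unnecessary: $w_i^{-1}\nu_jw_i$ equals $\nu_{j+2}$ for $j\le i-1$ and $\nu_j$ for $j\ge i+3$.
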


\begin{proof}
We need to prove that every defining relation of the original presentation of $VSLM
_n$ follows from the reduced presentation.

(a) We begin by observing that the defining relations~\eqref{def_rel_ell} immediately imply that $\ell_i^2 = 1$ for all $i\geq 2$. Therefore, when combining with the base case relation (8.4), it follows that relations~\eqref{eqs15} hold for all $1\leq i \leq n-1$. 

(b) The proof that relations~\eqref{eqs17} and~\eqref{eqs18} follow from the reduced presentation is analogous to the proof of~\cite[Lemma 27, part (ii)]{CaprauNasser}, after replacing $s_i$ with $\ell_i$.

(c) We next  show that relations~\eqref{eqs19} hold. We start by employing the defining relations~\eqref{def_rel_ell} and \eqref{def_rel_tau} for $\tau_i$ and $\ell_i$, respectively.
\begin{align*}
\tau_i \ell_i =& [ (\nu_{i-1}\ldots \nu_2 \nu_1)(\nu_{i}\ldots \nu_3 \nu_2)\tau_1  (\nu_2 \nu_3 \ldots \nu_{i}) (\nu_1\nu_2 \ldots \nu_{i-1}) ]\,  [ (\nu_{i-1}\ldots \nu_2\nu_1) \cdot \\ 
& (\nu_{i}\ldots \nu_3 \nu_2)\ell_1 (\nu_2 \nu_3 \ldots \nu_{i})(\nu_1 \nu_2\ldots \nu_{i-1}) ]\\
\stackrel{(8.3)}{=}&  (\nu_{i-1}\ldots \nu_2 \nu_1)(\nu_i \ldots \nu_3 \nu_2)\tau_1  \ell_1  (\nu_2 \nu_3 \ldots \nu_i)(\nu_1 \nu_2 \ldots \nu_{i-1}). 
\end{align*}
Similarly, we obtain:
\[
 \ell_i \tau_i=(\nu_{i-1}\ldots \nu_2 \nu_1)(\nu_i \ldots \nu_3 \nu_2) \ell_1  \tau_1(\nu_2 \nu_3 \ldots \nu_i)(\nu_1 \nu_2 \ldots \nu_{i-1}). 
\]
Since $\tau_1 \ell_1 = \ell_1 \tau_1$, it follows that $\tau_i \ell_i =  \ell_i \tau_i$, for all $1\leq i \leq n-1$.

(d) Relations~\eqref{eqs20} and \eqref{eqs21} are equivalent, and are established using similar steps as in~\cite[Lemma 28]{CaprauNasser}, with $s_i$ replaced by $\ell_i$.

(e) We next prove that relations~\eqref{eqs24} hold.  For $i = 1$, the relation $\nu_1 \ell_2 \nu_1 = \nu_2 \ell_1 \nu_2$ follows immediately from the defining relation $\ell_2 = \nu_1 \nu_2 \ell_1 \nu_2 \nu_1$ and the relation $\nu_1^2 = 1$. Now assume that $i \geq 2$. Then, 
\begin{align*}
\nu_i \ell_{i+1} \nu_i \stackrel{\eqref{def_rel_ell}}{=}& \underline{\nu_i  (\nu_i} \ldots \nu_2 \nu_1)(\nu_{i+1}\ldots \nu_3 \nu_2)\ell_1 (\nu_2 \nu_3 \ldots \nu_{i+1})(\nu_1\nu_2\ldots \underline{\nu_i) \nu_i}\\
\stackrel{(8.1)}{=}&  (\nu_{i-1}\ldots \nu_2 \nu_1)(\underline{\nu_{i+1}}\ldots \nu_3 \nu_2) \ell_1 (\nu_2 \nu_3\ldots \underline{\nu_{i+1}})(\nu_1\nu_2\ldots \nu_{i-1}) \\
\stackrel{(8.3)}{=}& \nu_{i+1}  (\underline{\nu_{i-1}\ldots \nu_2 \nu_1)(\nu_i\ldots \nu_3 \nu_2)\ell_1 (\nu_2 \nu_3 \ldots \nu_i)(\nu_1 \nu_2 \ldots \nu_{i-1}}) \nu_{i+1}\\
\stackrel{\eqref{def_rel_ell}}{=}& \nu_{i+1} \ell_i \nu_{i+1}.
\end{align*}
At each step above, we underlined the subword to which a defining relation is applied and indicate above the equality sign the relation being used. 
Therefore, $\nu_i \ell_{i+1} \nu_i = \nu_{i+1} \ell_i \nu_{i+1}$ for all $1\leq i \leq n-2$.  The proof of relation~\eqref{eqs25} is identical, replacing $\ell_i$ with $\tau_i$ throughout.

(f) We prove next that relations~\eqref{eqs27} and~\eqref{eqs28} follow from the reduced presentation.
By the defining relations~\eqref{def_rel_ell} for $\ell_i$, we have:
  \[\ell_i \nu_j =(\nu_{i-1}\ldots
\nu_2 \nu_1)\,(\nu_i \ldots \nu_3 \nu_2)\,\ell_1\, (\nu_2 \nu_3 \ldots \nu_i)\,(\nu_1 \nu_2\ldots \nu_{i-1})\, \nu_j.\]  
If $|i-j| \geq 2$, then either $j\geq i+2$ or $j\leq i-2$. If $j\geq i+2$, then $\nu_j$  commutes  with all generators in the above
expression, thus $\ell_i \nu_j  =  \nu_j \ell_i$ in this case. 
 If $j\leq i-2$, we apply the following steps, where we move $\nu_j$ successively to the left using the braid relations (8.2) and the commutation relations (8.3):
\begin{align*}
\ell_i \nu_j  \stackrel{\eqref{def_rel_ell}}{=}&  (\nu_{i-1}\ldots \nu_2\nu_1)(\nu_i\ldots \nu_3\nu_2)\ell_1 (\nu_2\nu_3\ldots \nu_i)(\nu_1\nu_2\ldots \nu_{i-1})\underline{\nu_j}  \\
\stackrel{(8.3)}{=}&  (\nu_{i-1}\ldots \nu_1)(\nu_i\ldots \nu_2)\ell_1 (\nu_2\nu_3\ldots \nu_i)(\nu_1\nu_2 \ldots \nu_{j-1}\underline{\nu_j \nu_{j+1} \nu_j}  \nu_{j+2} \ldots \nu_{i-1})  \\ 
\stackrel{(8.2)}{=}&  (\nu_{i-1}\ldots \nu_1)(\nu_i\ldots \nu_2)\ell_1 (\nu_2 \nu_3\ldots  \nu_i)(\nu_1\nu_2 \ldots \nu_{j-1} \underline{\nu_{j+1}} \nu_j  \nu_{j+1} \nu_{j+2} \ldots \nu_{i-1}) \\
\stackrel{(8.3)}{=}&  (\nu_{i-1}\ldots \nu_1)(\nu_i\ldots \nu_3\nu_2)\ell_1 (\nu_2\nu_3\ldots  \nu_j \underline{ \nu_{j+1} \nu_{j+2} \nu_{j+1}}  \nu_{j+3} \ldots \nu_i) (\nu_1\nu_2 \ldots \nu_{i-1}) \\
\stackrel{(8.2)}{=}&  (\nu_{i-1}\ldots \nu_1)(\nu_i\ldots \nu_3 \nu_2)\ell_1(\nu_2 \nu_3 \ldots  \nu_j\underline{ \nu_{j+2}} \nu_{j+1} \nu_{j+2} \nu_{j+3} \ldots \nu_i)  (\nu_1 \nu_2 \ldots \nu_{i-1}) \\
 \displaystyle  \mathop{=}_{(8.8)}^{(8.3)}&  (\nu_{i-1}\ldots \nu_1)(\nu_i\ldots \nu_{j+3} \underline{ \nu_{j+2}  \nu_{j+1} \nu_{j+2}} \nu_j\ldots \nu_2) \ell_1 (\nu_2\ldots  \nu_i) (\nu_1 \ldots \nu_{i-1}).
 \end{align*}
  Applying again relations~(8.2) and~(8.3) repeatedly, we obtain:
 \begin{align*}
 \ell_i \nu_j \stackrel{(8.2)}{=}&  (\nu_{i-1}\ldots \nu_2 \nu_1)(\nu_i \ldots  \nu_{j+3} \underline{ \nu_{j+1}} \nu_{j+2} \nu_{j+1} \nu_j\ldots \nu_2)\ell_1 (\nu_2 \ldots  \nu_i)  (\nu_1 \ldots \nu_{i-1}) \\
\stackrel{(8.3)}{=}&  (\nu_{i-1}\ldots \nu_{j+2} \underline{\nu_{j+1} \nu_j \nu_{j+1} } \nu_{j-1} \ldots \nu_2 \nu_1)  (\nu_i\ldots \nu_3 \nu_2)\ell_1 (\nu_2 \ldots  \nu_i) (\nu_1 \ldots \nu_{i-1}) \\
\stackrel{(8.2)}{=}& (\nu_{i-1}\ldots \nu_{j+2} \underline{\nu_j }  \nu_{j+1}  \nu_j  \nu_{j-1} \ldots \nu_2 \nu_1)  (\nu_i\ldots \nu_3\nu_2) \ell_1(\nu_2\ldots  \nu_i) (\nu_1 \ldots \nu_{i-1}) \\
\stackrel{(8.3)}{=}&  \nu_j (\nu_{i-1}\ldots  \nu_1)  (\nu_i\ldots \nu_2) \ell_1 (\nu_2\ldots  \nu_i) (\nu_1 \ldots \nu_{i-1}) 
 \displaystyle  \stackrel{\eqref{def_rel_ell}}{=}   \nu_j \ell_i.  
\end{align*} 
Therefore, $\ell_i \nu_j = \nu_j\ell_i$ for all $|i-j| \geq 2$. The proof of the relation $\tau_i \nu_j = \nu_j \tau_i$, where $|i-j| \geq 2$, is identical, replacing $\ell_i$ with $\tau_i$, and is therefore omitted.

(g) It remains to prove that relations~\eqref{eqs16} hold. The strategy is to expand both sides using the defining expressions for $\ell_i$, simplify the right-hand side until it matches the expansion of the left-hand side, and finally invoke the base relation~(8.6). We begin by observing that the following identity holds among the virtual generators (see \cite[Lemma 24]{CaprauNasser}):
\begin{align} \label{eqs_virtuals_prep} 
\nu_i \nu_{i-1}\ldots { \nu_{j+1}{\nu_j} \nu_{j+1}}\ldots \nu_{i-1} \nu_i =  \nu_j \nu_{j+1}\ldots \nu_{i-1}{\nu_i} \nu_{i-1}\ldots   \nu_{j+1} \nu_j.
 \end{align}  
The base case of the relation ~\eqref{eqs16} corresponds to $i = 1$ and $j = 2$, and is precisely  relation~(8.6) of the reduced presentation. We therefore assume  that $i\geq 2$.\\
We first expand the left-hand side of the desired identity. Using relations~\eqref{eqs21},~\eqref{def_rel_ell}, and \eqref{eqs_virtuals_prep}, we obtain:
\begin{align*}
\ell_i  \ell_{i+1} \ell_i = & (\nu_{i-1}\ldots \nu_1)(\nu_{i}\ldots \nu_2) ( \nu_{i+1} \ldots \nu_3) ( \ell_1   \nu_1 \nu_2  \ell_1 \nu_2 \nu_1  \ell_1)( \nu_3  \ldots  \nu_{i+1} ) \cdot  
\\&  (\nu_2\ldots \nu_i)(\nu_1\ldots \nu_{i-1}).
\end{align*}
Similarly, expanding the right-hand side yields,
\begin{align*}
\ell_{i+1} \underline{ \ell_i} \ell_{i+1} \stackrel{\eqref{def_rel_ell}}{=}& \ell_{i+1} (\nu_{i-1}\ldots \nu_1)  (\nu_i \ldots  \nu_2 \underline{(1_n)}) \ell_1 \underline{(1_n)(1_n)}(\nu_2 \ldots \nu_{i}) (\nu_1\ldots \nu_{i-1})  \ell_{i+1} \\
=\,\,\, & \underline{ \ell_{i+1} } (\nu_{i-1}\ldots \nu_1)  (\nu_i \ldots \nu_2 \nu_1 \nu_1) \ell_1 \underline{(\nu_{i+1}\ldots \nu_3)}   (\nu_3 \ldots  \nu_{i+1})  (\underline{\nu_1} \nu_1 )\cdot 
\\&( \nu_2\ldots \nu_{i}) (\nu_1\ldots \nu_{i-1})\underline{\ell_{i+1}}.
\end{align*}
Applying relations~\eqref{eqs27}, (8.3), and~(8.8) repeatedly, we obtain:
\begin{align*}
\ell_{i+1}  \ell_i \ell_{i+1}
=\,\,\,&  (\nu_{i-1}\ldots \nu_1) \ell_{i+1}  (\nu_i \ldots \nu_{1})(\nu_{i+1}\ldots \nu_3)\underline{(1_n)}  \nu_1  \ell_1 \,\nu_1 \underline{(1_n)} (\nu_3 \ldots  \nu_{i+1} ) \cdot 
\\& (\nu_1\ldots \nu_{i})\ell_{i+1} (\nu_1\ldots \nu_{i-1}) \\
=\,\,\,&  (\nu_{i-1}\ldots \nu_1) \underline{\ell_{i+1}} (\nu_i \ldots \nu_{1})(\nu_{i+1}\ldots \nu_3) ( \nu_2   \nu_2 ) \nu_1  \ell_1 \nu_1 (\nu_2   \nu_2 )( \nu_3 \ldots  \nu_{i+1} ) \cdot 
\\& (\nu_1\ldots \nu_i) \underline{\ell_{i+1}} (\nu_1\ldots \nu_{i-1}) \\
\stackrel{\eqref{def_rel_ell}}{=}& (\nu_{i-1}\ldots \nu_1)  (\nu_{i}\ldots \nu_1) ( \nu_{i+1} \ldots \nu_2 ) \ell_1 \underline{(\nu_2 \ldots \nu_{i+1})(\nu_1 \ldots \nu_i)(\nu_i \ldots \nu_1)(\nu_{i+1}\ldots \nu_2) } \cdot
\\& \nu_2 \nu_1  \ell_1 \nu_1 \nu_2  \underline{( \nu_2 \dots  \nu_{i+1} ) (\nu_1\dots \nu_i) (\nu_i \dots \nu_1) ( \nu_{i+1} \dots \nu_2 )}\ell_1 ( \nu_2 \dots  \nu_{i+1} ) (\nu_1\dots \nu_i) \cdot
\\&  (\nu_1\dots \nu_{i-1})\\
\stackrel{(8.1)}{=}&  (\nu_{i-1}\ldots \nu_1)(\nu_i \ldots \underline{\nu_1}) ( \nu_{i+1} \ldots \nu_2 ) \ell_1 \nu_2 \nu_1  \ell_1 \nu_1 \nu_2 \ell_1 ( \nu_2 \ldots  \nu_{i+1} )     (\underline{\nu_1}\ldots \nu_i) \cdot
\\& (\nu_1\ldots \nu_{i-1}).
\end{align*}
Finally, using the commutation relations~(8.3), we move the underlined left occurrence of $\nu_1$ as far to the right as possible and the underlined right occurrence as far to the left. This yields:
\begin{align*}
\ell_{i+1}  \ell_i \ell_{i+1}
\stackrel{(8.3)}{=}&  (\nu_{i-1}\ldots \nu_1)(\nu_i\ldots \nu_2) ( \nu_{i+1} \ldots \nu_3)(\underline{\nu_1 \nu_2 \ell_1  \nu_2 \nu_1 ) \ell_1(\nu_1 \nu_2 \ell_1 \nu_2  \nu_1 }  )( \nu_3 \ldots  \nu_{i+1} )    \cdot 
\\&   (\nu_2\ldots \nu_i) (\nu_1\ldots \nu_{i-1}) \\
\stackrel{(8.6)}{=}&  (\nu_{i-1}\ldots \nu_1)(\nu_i \ldots \nu_2) ( \nu_{i+1} \ldots \nu_3) ( \ell_1   \nu_1 \nu_2  \ell_1 \nu_2 \nu_1  \ell_1  )( \nu_3  \ldots  \nu_{i+1} ) (\nu_2 \ldots \nu_i) \cdot 
\\& (\nu_1\dots \nu_{i-1}). 
\end{align*}
The final expression coincides with the expansion of $\ell_i  \ell_{i+1} \ell_i$. Hence,
$ \ell_i  \ell_{i+1} \ell_i = \ell_{i+1}  \ell_i \ell_{i+1}$ 
for all $i \geq 2$. Together with the base case, this establishes relation~\eqref{eqs16}.

Therefore, every defining relation of the original presentation of $VSLM_n$ follows from the reduced presentation.
\end{proof}

We note that there is no analogous reduced presentation for the singular triplet monoid $SLM_n$, since the existence of a reduced presentation for $VSLM_n$ relies on the presence of the virtual generators and the relations among them.

\section{The Virtual Singular Triplet Monoid Via Triplet Connecting Strings} \label{sec:newpres}

There exists an obvious homomorphism $\pi$ from $VSLM_n$ to $S_n$, defined on its generators as follows:
\[ \pi: VSLM_n \longrightarrow S_n, \,\, \pi(\ell_i) =\pi(\tau_i) =\pi(\nu_i) =(i \ \   i+1), \,\,\, \text{for all} \,\, \, 1 \leq i \leq n-1. \]
The elements of $VSLM_n$ that induce the identity permutation deserve special attention. We refer to the kernel of the homomorphism $\pi$ as the virtual singular pure triplet monoid on $n$ strands, and we denote it by $VSPLM_n$. 

The goal of this section is to introduce a new presentation of the virtual singular triplet monoid $VSLM_n$ in terms of certain distinguished elements of the normal submonoid $VSPLM_n$, which we now define.   
 
 \subsection{Triplet Connecting Strings and Their Relations} \label{sec:ConnectingStrings}
 
  \begin{definition}\label{def:cstrings}
  The \textit{triplet connecting strings} $\mu_i$ and $\gamma_i$, where $1 \leq i \leq n-1$, are the elements defined as follows:
   \[\mu_i: = \ell_i \nu_i \  \text{ and } \  \gamma_i: = \tau_i \nu_i.\]
    \end{definition} 
It is immediate from the defining relations that the elements $\mu_i$ are invertible with $\mu_i^{-1} = \nu_i \ell_i$. On the other hand, the elements $\gamma_i$ are not invertible.

\begin{lemma}\label{lemma:rel_connecting_strings}
The following relations hold in $VSLM_n$.
\begin{enumerate}
\item [(i)]  $\nu_i\mu_{j} \nu_{i} = \nu_{j} \mu_{i} \nu_{j} \ \text{ and }\  \nu_i\gamma_{j} \nu_{i} = \nu_{j}\gamma_{i} \nu_{j}$ for $|i-j|=1$
\item [(ii)] $\mu_{j}( \nu_{j}\mu_{i} \nu_{j})\mu_i=\mu_{i}( \nu_{j}\mu_{i} \nu_{j})\mu_{j}$ for $|i-j|=1$
\item [(iii)] $\mu_{j}( \nu_{j}\mu_{i} \nu_{j})\gamma_i=\gamma_{i}( \nu_{j}\mu_{i} \nu_{j})\mu_{j}$ for $|i-j|=1$
 \item [(iv)] $\mu_{i} \nu_i\gamma_i = \gamma_i \nu_i\mu_{i}$ for all $1\leq i \leq n-1$
 \item [(v)] $\gamma_i \gamma_j = \gamma_j \gamma_i, \ \mu_i\gamma_j=\gamma_j\mu_i, \ \nu_i \mu_j = \mu_j \nu_i, \text{ and }\ \nu_i \gamma_j = \gamma_j \nu_i$, where $|i-j| \geq 2$.
\end{enumerate}
\end{lemma}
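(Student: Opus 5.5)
The plan is to verify each identity directly by substituting $\mu_k=\ell_k\nu_k$ and $\gamma_k=\tau_k\nu_k$ and reducing to the defining relations of Definition~\ref{monoid-original_presentation}, namely \eqref{eqs15}--\eqref{eqs28}. Throughout, the involution relations $\nu_k^2=1$ and $\ell_k^2=1$ are used to cancel adjacent pairs. I treat the case $j=i+1$ explicitly; the case $j=i-1$ is the same argument with $i$ and $j$ swapped, since every relation used is symmetric in that sense, including \eqref{eqs24}, \eqref{eqs25} and the $\nu$-braid relation \eqref{eqs23}.

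\textbf{Parts (i), (iv) and (v).} For (i), expand $\nu_i\mu_j\nu_i=\nu_i\ell_j\nu_j\nu_i$. We have $\nu_i\ell_j\nu_i=\nu_j\ell_i\nu_j$ by \eqref{eqs24}, so $\nu_i\ell_j=\nu_j\ell_i\nu_j\nu_i$. Hence
\[
\nu_i\mu_j\nu_i=\nu_j\ell_i\,(\nu_j\nu_i\nu_j)\,\nu_i=\nu_j\ell_i\,\nu_i\nu_j\nu_i\,\nu_i=\nu_j\ell_i\nu_i\nu_j=\nu_j\mu_i\nu_j ,
\]
using \eqref{eqs23}. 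The same computation with \eqref{eqs25} in place of \eqref{eqs24} gives the $\gamma$ version. For (iv), $\mu_i\nu_i\gamma_i=\ell_i\tau_i\nu_i$ and $\gamma_i\nu_i\mu_i=\tau_i\ell_i\nu_i$, so (iv) is exactly \eqref{eqs19}. Part (v) follows at once from the far-commutation relations \eqref{eqs17}, \eqref{eqs18}, \eqref{eqs26}, \eqref{eqs27} and \eqref{eqs28}.

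\textbf{Parts (ii) and (iii).} First simplify the middle factor. By (i), $\nu_j\mu_i\nu_j=\nu_i\mu_j\nu_i=\nu_i\ell_j\nu_j\nu_i$. Then (ii) reads
\[
\ell_j\nu_j\nu_i\ell_j\nu_j\nu_i\,\ell_i\nu_i=\ell_i\nu_i\nu_i\ell_j\nu_j\nu_i\,\ell_j\nu_j .
\]
\begin{itemize}
\item On the right, $\nu_i\nu_i$ cancels and we get $\ell_i\ell_j\nu_j\nu_i\ell_j\nu_j$.
\item On the left, use \eqref{eqs24} in the form $\nu_j\nu_i\ell_j\nu_i\nu_j=\ell_i$, i.e.\ $\nu_j\nu_i\ell_j=\ell_i\nu_j\nu_i$, to push $\ell_j$ leftward through $\nu_j\nu_i$. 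This turns the left side into $\ell_j\ell_i\nu_j\nu_i\nu_j\nu_i\ell_i\nu_i$.
\item Using \eqref{eqs23}, $\nu_j\nu_i\nu_j\nu_i=\nu_i\nu_j$.
\end{itemize}
The identity thus becomes one between words of the form (product of two $\ell$'s)(permutation word in $\nu$'s)(one $\ell$). Moving all $\nu$'s to a common side with the conjugation rule $\nu\ell_k\nu=\ell_{k'}$ from \eqref{eqs24}, both sides should reduce to $\ell_j\ell_i\ell_j$ versus $\ell_i\ell_j\ell_i$ times the same $\nu$-word. Equality then follows from \eqref{eqs16}. Part (iii) is the identical computation with the final $\ell_i$ replaced by $\tau_i$, using \eqref{eqs25} for the conjugation step. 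It reduces to $\ell_j\ell_i\tau_j=\tau_i\ell_j\ell_i$, up to the common $\nu$-tail, which is \eqref{eqs20} or \eqref{eqs21} (equivalently, Remark~\ref{RK6}).

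\textbf{Main obstacle.} The main obstacle is the bookkeeping in (ii) and (iii): making sure that after pushing the $\nu$'s through, the leftover virtual words on both sides agree exactly, and that the triplet letter which ends up in the $\tau$-position lands on the correct index so that \eqref{eqs20} rather than \eqref{eqs21} applies, or vice versa. I would do this by tracking the permutation $\pi(\cdot)$ of the $\nu$-tail on each side as a check.
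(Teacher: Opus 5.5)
Your proposal is correct and follows essentially the paper's route: you substitute $\mu_k=\ell_k\nu_k$ and $\gamma_k=\tau_k\nu_k$, move triplet and singular letters through $\nu$-pairs using \eqref{eqs24} and \eqref{eqs25}, and close with \eqref{eqs16} and \eqref{eqs23} for (ii), \eqref{eqs20}/\eqref{eqs21} for (iii), \eqref{eqs19} for (iv), and the far-commutation relations for (v). The step you hedged in (ii) does go through, since one more conjugation gives $\ell_j\ell_i\ell_j\,\nu_i\nu_j\nu_i$ on the left and $\ell_i\ell_j\ell_i\,\nu_j\nu_i\nu_j$ on the right, which are exactly the paper's intermediate forms; your preliminary rewriting of the middle factor via (i) is harmless but unnecessary, because $\nu_j\nu_j$ cancels directly after expanding $\mu_j(\nu_j\mu_i\nu_j)$.
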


\begin{proof}
The proofs of identities (i), (iii), and (iv) are analogous to those of the corresponding identities in \cite[Lemma 30]{CaprauNasser}, with $s_i$ replaced by $\ell_i$. We prove identity (ii) below.
At each step, we underline the subword to which a defining relation of $VSLM_n$ is applied and indicate above the equality sign the relation being used. Throughout the proof, we use the following equivalent form of relations~\eqref{eqs24}:
\[ \nu_i \nu_j \ell_i = \ell_j \nu_i \nu_j, \,\, \text{where} \,\, |i-j| = 1.\]
In the first step below, we make use of Definition~\ref{def:cstrings}. 
\begin{align*}
\mu_{j}(\nu_{j}\mu_{i} \nu_{j})\mu_i &= \ell_{j}\underline{\nu_{j} \nu_{j}} \ell_i \nu_i \nu_{j}\ell_i \nu_i \stackrel{\eqref{eqs22}}{=} \ell_{j}\ell_i\underline{\nu_i \nu_{j} \ell_i} \nu_i \stackrel{\eqref{eqs24}}{=}\underline{\ell_{j}\ell_i\ell_{j}}\,\,\,\underline{\nu_i \nu_{j} \nu_i} \stackrel{\eqref{eqs16}, \eqref{eqs23}}{=}\ell_{i}\ell_{j}\underline{\ell_{i}\nu _{j} \nu_{i}}\nu_{j}\\
&\stackrel{\eqref{eqs24}}{=} \ell_{i}\ell_{j}\nu_{j}\nu_{i}\ell_{j}\nu_{j} = \ell_{i}\ell_{j}\nu_{j}\nu_{i}\underline{1_n}\ell_{j}\nu_{j} \stackrel{\eqref{eqs22}}{=}  \ell_{i}\ell_{j}\underline{\nu_{j}\nu_{i}\nu_{j}}\nu_{j}\ell_{j}\nu_{j}.
\end{align*}
Applying identities~\eqref{eqs23} and~\eqref{eqs24} once more, together with the definition of the triplet connecting strings $\mu_i$, yields the desired result:
\begin{align*}
\mu_{j}( \nu_{j}\mu_{i}\nu_{j})\mu_i &=\ell_{i}\ell_{j}\underline{\nu_{j}\nu_{i}\nu_{j}}\nu_{j}\ell_{j}\nu_{j} \stackrel{\eqref{eqs23}}{=} \ell_{i}\underline{\ell_{j}\nu_{i}\nu_{j} }\nu_{i}\nu_{j}\ell_{j}\nu_{j} \stackrel{\eqref{eqs24}}{=}  
\underline{\ell_{i}\nu_{i}}\nu_{j}\underline{\ell_{i}\nu_{i}}\nu_{j}\underline{\ell_{j}\nu_{j}}\\
&= \mu_i(\nu_{j}\mu_i\nu_{j})\mu_{j}.
\end{align*}
Relations (v) are immediate consequences of identities~\eqref{eqs17}, \eqref{eqs18}, \eqref{eqs27}, and~\eqref{eqs28}.
\end{proof}

\subsection{Presentations for $VSLM_n$ Using the Triplet Connecting Strings} \label{sec:PresFusing}

In this subsection, we introduce a presentation of the monoid $VSLM_n$ in terms of the triplet connecting strings. To this end, we first define a monoid whose generators are the triplet connecting strings together with the virtual generators. The relations established in Lemma~\ref{lemma:rel_connecting_strings} are taken as defining relations of this monoid.

\begin{definition} \label{def:Monoid M_n}
Let $M_n$ be the monoid with the following presentation using generators $\{\mu_i^{\pm 1},  \gamma_i, \nu_i  \, \big{\vert}  \,1\leq i \leq {n-1}\}$ and relations:
     \begin{align*}
     \nu_i^2=1 \,\, &\text{and}\,\, \, \mu_i\mu_i^{-1}=1=\mu_i^{-1}\mu_i \,\,\, \text{for all } 1\le i\le n-1,  \\
       \nu_i \nu_j \nu_i&= \nu_j \nu_i \nu_j,  \,\,  |i-j|=1, \\
       \nu_i \mu_j \nu_i&= \nu_j \mu_i \nu_j,  \,\,  |i-j|=1, \\
 \nu_i \gamma_j \nu_i&= \nu_j \gamma_i \nu_j, \,\,  |i-j|=1, \\
 \mu_{j}( \nu_{j}\mu_{i} \nu_{j})\mu_i &=\mu_{i}( \nu_{j}\mu_{i} \nu_{j})\mu_{j},  \,\, |i-j|=1,  \\
       \mu_j(\nu_j \mu_i \nu_j) \gamma_i &=\gamma_i (\nu_j \mu_i \nu_j) \mu_j, \,\,   |i-j|=1, \\
         \mu_i \nu_i\gamma_i &=\gamma_i \nu_i \mu_i,  \,\,\, \text{for all } 1\le i\le n-1,  \\
  \gamma_i \gamma_j &= \gamma_j \gamma_i \,\, \text{and} \,\,  \mu_i\gamma_j=\gamma_j\mu_i,  \,\, |i-j| \geq 2,  \\
   \nu_i \mu_j &= \mu_j \nu_i, \nu_i \gamma_j = \gamma_j \nu_i \,\, \text{and} \,\,  \nu_i \nu_j = \nu_j \nu_i, \,\, |i-j| \geq 2.  
\end{align*} 
\end{definition}

\begin{theorem}\label{theorem:iso}
The monoids $M_n$ and $VSLM_n$ are isomorphic.
\end{theorem}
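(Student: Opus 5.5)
We will construct two monoid homomorphisms $\phi: M_n \to VSLM_n$ and $\psi: VSLM_n \to M_n$ and show that they are mutually inverse. On generators we set
\[
\phi(\mu_i)=\ell_i\nu_i,\quad \phi(\mu_i^{-1})=\nu_i\ell_i,\quad \phi(\gamma_i)=\tau_i\nu_i,\quad \phi(\nu_i)=\nu_i,
\]
\[
\psi(\ell_i)=\mu_i\nu_i,\quad \psi(\tau_i)=\gamma_i\nu_i,\quad \psi(\nu_i)=\nu_i .
\]

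The map $\phi$ is well defined essentially by Lemma~\ref{lemma:rel_connecting_strings}. Relations (i)--(v) of that lemma are exactly the relations of $M_n$ involving $\mu_i,\gamma_i,\nu_i$. The relations purely among the $\nu_i$ are \eqref{eqs22}, \eqref{eqs23} and \eqref{eqs26}. Finally, $\mu_i\mu_i^{-1}=1=\mu_i^{-1}\mu_i$ maps to $\ell_i\nu_i\nu_i\ell_i=1$ and $\nu_i\ell_i\ell_i\nu_i=1$, which hold by \eqref{eqs15} and \eqref{eqs22}.

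The main work is checking that $\psi$ respects every defining relation \eqref{eqs15}--\eqref{eqs28} of $VSLM_n$. We treat them in groups.
\begin{enumerate}
\item[(a)] The purely virtual relations are immediate.
\item[(b)] Relations \eqref{eqs17}, \eqref{eqs18}, \eqref{eqs27}, \eqref{eqs28} follow from the far-commutation relations of $M_n$.
\item[(c)] Relation \eqref{eqs24} becomes $\nu_i\mu_{i+1}\nu_{i+1}\nu_i=\nu_{i+1}\mu_i\nu_i\nu_{i+1}$. Using $\nu_i\mu_{i+1}\nu_i=\nu_{i+1}\mu_i\nu_{i+1}$, the left side equals $\nu_{i+1}\mu_i\nu_{i+1}\nu_i\nu_{i+1}\nu_i$. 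Applying the braid relation among the $\nu$'s turns $\nu_{i+1}\nu_i\nu_{i+1}\nu_i$ into $\nu_i\nu_{i+1}$, which gives the right side. Relation \eqref{eqs25} is handled identically.
\item[(d)] Relation \eqref{eqs15} is a potential snag. $\psi(\ell_i)^2=\mu_i\nu_i\mu_i\nu_i$ must equal $1$, but no relation of $M_n$ visibly forces $\nu_i\mu_i\nu_i=\mu_i^{-1}$. So we will first verify whether $\mu_i\nu_i\mu_i\nu_i=1$ is derivable. If it is not, the intended $M_n$ must implicitly include it (since $\mu_i^{-1}=\nu_i\ell_i=\nu_i\mu_i\nu_i$ in $VSLM_n$), and we will note that $\mu_i^{-1}$ is meant to be identified with $\nu_i\mu_i\nu_i$ and use that identification.
\item[(e)] Relation \eqref{eqs19} becomes $\gamma_i\nu_i\mu_i\nu_i=\mu_i\nu_i\gamma_i\nu_i$, which is relation (iv) of $M_n$ multiplied on the right by $\nu_i$.
\end{enumerate}

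The step we expect to be the main obstacle is the braid-type relations \eqref{eqs16} and \eqref{eqs20}. Following the computation in the proof of Lemma~\ref{lemma:rel_connecting_strings}(ii) in reverse, we will rewrite
\[
\psi(\ell_j\ell_i\ell_j)=\mu_j\nu_j\mu_i\nu_i\mu_j\nu_j .
\]
Inserting $\nu_j\nu_j=1$, we aim to express this as $\mu_j(\nu_j\mu_i\nu_j)\mu_i\cdot w$, where $w$ is a word in the $\nu$'s. To do this we use the relations $\nu_i\mu_j\nu_i=\nu_j\mu_i\nu_j$ to slide the $\nu$'s, together with the braid relation of the $\nu$'s. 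Applying the defining relation (ii) of $M_n$ and reversing the moves then yields $\psi(\ell_i\ell_j\ell_i)$. The same manipulation with the last $\mu_i$ replaced by $\gamma_i$, using relation (iii), gives \eqref{eqs20}. Relation \eqref{eqs21} is equivalent to \eqref{eqs20} by Remark~\ref{RK6}, so it needs no separate check; in the monoid setting we will still confirm that the derivation of this equivalence uses only \eqref{eqs15}.

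Finally, we check that $\phi\circ\psi$ and $\psi\circ\phi$ are the identity on generators:
\[
\phi\psi(\ell_i)=\ell_i\nu_i\nu_i=\ell_i,\qquad \psi\phi(\mu_i)=\mu_i\nu_i\nu_i=\mu_i .
\]
The corresponding checks for $\tau_i$, $\gamma_i$ and $\nu_i$ are identical, and $\psi\phi(\mu_i^{-1})=\nu_i\mu_i\nu_i$ equals $\mu_i^{-1}$ under the identification from step (d). Hence $\phi$ and $\psi$ are inverse isomorphisms.
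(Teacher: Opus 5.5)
You follow the paper's route (mutually inverse maps $f,g$), and the obstruction you flag in step (d) is a genuine gap that cannot be closed: $\mu_i\nu_i\mu_i\nu_i=1$ does not follow from the defining relations of $M_n$. Indeed, $\mu_i\mapsto 1$, $\mu_i^{-1}\mapsto -1$, $\nu_i\mapsto 0$, $\gamma_i\mapsto 0$ defines a monoid homomorphism $M_n\to(\mathbb{Z},+)$. Once the $\nu_i$ go to $0$, every defining relation of $M_n$ holds in $\mathbb{Z}$: the detour relations only require all $\mu_i$, resp.\ all $\gamma_i$, to have equal images, and the remaining relations are automatic in an abelian group. This homomorphism sends $(\mu_i\nu_i)^2$ to $2\neq 0$. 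Hence your $\psi$ (the paper's $g$) does not respect \eqref{eqs15}, and $\psi\phi(\mu_i^{-1})=\nu_i\mu_i\nu_i\neq\mu_i^{-1}$. In fact no isomorphism of any kind exists. The monoid homomorphisms $M_n\to\mathbb{Z}$ form a group isomorphic to $\mathbb{Z}^2$, since the common value on the $\mu_i$ and the common value on the $\gamma_i$ are independent. For $VSLM_n$, however, $\ell_i^2=\nu_i^2=1$ forces $\ell_i,\nu_i\mapsto 0$, and \eqref{eqs20} forces all $\tau_i$ to one value, so that group is only $\mathbb{Z}$. So the theorem is false as stated. The paper's proof breaks exactly at its unchecked sentence that $g$ ``preserves the defining relations of $VSLM_n$.'' Your fallback, declaring that the intended $M_n$ already contains the missing relation, changes the statement rather than proving it.

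What your argument does prove is the corrected statement, in which $\mu_i^{-1}=\nu_i\mu_i\nu_i$ (equivalently $(\mu_i\nu_i)^2=1$) is added to $M_n$. The map $\phi$ respects this relation since $\nu_i\ell_i\nu_i\nu_i=\nu_i\ell_i$, and your steps (a)--(c) and (e) are correct. Your sketch for \eqref{eqs16} works with $w=\nu_j\nu_i\nu_j$, and it needs only (ii), the detour relations and the braid relation of the $\nu$'s:
\begin{align*}
\mu_j\nu_j\mu_i\nu_i\mu_j\nu_j
&=\mu_j(\nu_j\mu_i\nu_j)\mu_i\,\nu_j\nu_i\nu_j
=\mu_i(\nu_j\mu_i\nu_j)\mu_j\,\nu_i\nu_j\nu_i\\
&=\mu_i\nu_j\mu_i\nu_j\,\nu_i\nu_j\mu_i\nu_i
=\mu_i\nu_i\mu_j\nu_j\mu_i\nu_i .
\end{align*}
The steps use, in order: $\nu_i\mu_j=\nu_j\mu_i\nu_j\nu_i$; then (ii) together with $\nu_j\nu_i\nu_j=\nu_i\nu_j\nu_i$; then $\mu_j\nu_i\nu_j=\nu_i\nu_j\mu_i$; and finally $\nu_j\mu_i\nu_j=\nu_i\mu_j\nu_i$. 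Your appeal to Remark~\ref{RK6} for \eqref{eqs21} is legitimate only because the added relation supplies \eqref{eqs15}. Alternatively, relation (iii) applied to both orderings of an adjacent pair gives \eqref{eqs20} and \eqref{eqs21} directly after similar detour manipulations. The same missing relation also invalidates the reduced presentation in Proposition~\ref{prop:redvssn}, as the same homomorphism to $\mathbb{Z}$ shows.
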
   

\begin{proof}
Consider the map $f: M_n \longrightarrow VSLM_n$ defined on generators by 
\[f(\nu_i) = \nu_i, \quad f(\mu_i) = \ell_i \nu_i,   \quad  f(\gamma_{i})=\tau_i \nu_i,\]
and extend it multiplicatively to all elements of $M_n$. By Lemma~\ref{lemma:rel_connecting_strings}, together with the definition for the monoid $M_n$, the map $f$ preserves the relations for $M_n$. Hence $f$ is a well-defined monoid homomorphism. Next, define the homomorphism $g: VSLM_n\longrightarrow M_n$ defined on the generators of $VSLM_n$ by, 
 \[g(\nu_i) = \nu_i,\quad g(\ell_i) = \mu_i \nu_i, \quad g(\tau_i) = \gamma_i \nu_i.\]
It is straightforward to verify that the map $g$ preserves the defining relations of $VSLM_n$, and thus $g$ is a well-defined monoid homomorphism. Moreover, $f\circ g = \id_{VSLM_n}$ and $g\circ f = \id_{M_n}$. Therefore, $f$ and $g$ are inverse isomorphisms, and consequently, $M_n$ and $VSLM_n$ are isomorphic.
\end{proof}

\begin{remark}
Since $VSLM_n$ is isomorphic to $M_n$, the presentation of $M_n$ given in Definition~\ref{def:Monoid M_n} yields an alternative presentation for the virtual singular triplet monoid $VSLM_n$ with generators $\mu_i^{\pm 1}, \gamma_i$ and $\nu_i$.\\
There is a reduced presentation for the monoid $M_n$ by expressing the generators $\mu_i^{\pm 1}$ and $\gamma_i$ in terms of 
 $\mu_1^{\pm 1}$ and $\gamma_1$, respectively, as follows:
\[\mu_{i+1}^{\pm{1}} = (\nu_i\dots \nu_1)(\nu_{i+1}\dots \nu_3 \nu_2)\mu_1^{\pm{1}}(\nu_2 \nu_3 \dots \nu_{i+1})(\nu_1\nu_2\dots \nu_i),\]
\[\gamma_{i+1} = (\nu_i\dots \nu_1)(\nu_{i+1}\dots \nu_3 \nu_2)\gamma_1(\nu_2 \nu_3\dots \nu_{i+1})(\nu_1\nu_2\dots \nu_i).\]
Consequently, every relation in Definition~\ref{def:Monoid M_n} can be rewritten using only the generators $\mu_1^{\pm{1}}, \gamma_1$, and $\nu_1, \dots, \nu_{i-1}$. It follows that $VSLM_n$ admits a reduced presentation with generators $\mu_1^{\pm{1}},\gamma_1$, and $\nu_i$, for all $1\leq i \leq n-1$, as described by the following statement.
\end{remark}

Substituting the expressions for $\mu_i$ and $\gamma_i$ from the preceding remark into the relations of Definition~\ref{def:Monoid M_n}, one obtains the following reduced presentation.

\begin{proposition}\label{prop:redvssn}
The monoid $VSLM_n$ admits the following reduced presentation. It is generated by $\{\nu_1, \nu_2,\dots , \nu_{n-1}\} \cup \{\mu_1, \mu_1^{-1}, \gamma_1\}$, subject to the relations:
\begin{align*}
     \nu_i^2=1 \,\, &\text{and}\,\, \,  \mu_{1}^{-1}\mu_{1}=1 = \mu_{1}\mu^{-1}_{1},\\
          \nu_i \nu_j \nu_i&= \nu_j \nu_i \nu_j, \,\, |i-j|=1,\\
       (\nu_1 \nu_2 \mu_{1}\nu_2 \nu_1)(\nu_2\mu_{1}\nu_2)\mu_{1}&=\mu_{1}(\nu_2\mu_{1}\nu_2)(\nu_1\nu_2\mu_{1}\nu_2 \nu_1), \\
      (\nu_1\nu_2\mu_{1}\nu_2\nu_1)(\nu_2\mu_{1}\nu_2)\gamma_{1}&=\gamma_{1}(\nu_2\mu_{1}\nu_2)(\nu_1\nu_2\mu_{1}\nu_2\nu_1),\\          \mu_{1}\nu_1\gamma_{1}&=\gamma_{1}\nu_1\mu_{1},\\
       \nu_i\nu_j&=\nu_j\nu_i, \,\, |i-j|>1, \\
   \mu_{1} \nu_i  = \nu_i \mu_1\,\,\, &\text{and}\,\, \,\gamma_1\nu_i=\nu_i\gamma_1, \,\, i \geq 3,\\
        \gamma_{1} (\nu_2 \nu_1 \nu_3 \nu_2 \gamma_{1} \nu_2 \nu_3 \nu_1 \nu_2) &=(\nu_2 \nu_1 \nu_3 \nu_2 \gamma_{1} \nu_2 \nu_3 \nu_1 \nu_2)\gamma_{1},  \\
        \gamma_{1} (\nu_2 \nu_1 \nu_3 \nu_2 \mu_{1} \nu_2  \nu_3 \nu_1 \nu_2)& =(\nu_2 \nu_1 \nu_3 \nu_2 \mu_{1} \nu_2 \nu_3 \nu_1 \nu_2)\gamma_{1}. 
 \end{align*} 
\end{proposition}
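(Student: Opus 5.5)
The plan is to combine Theorem~\ref{theorem:iso} with a Tietze-transformation argument that mirrors the proof of Theorem~\ref{thm:reduced}. Let $R_n$ be the monoid given by the reduced presentation in the statement. In $R_n$ we \emph{define} $\mu_{i+1}^{\pm1}$ and $\gamma_{i+1}$ by the conjugation formulas of the preceding remark. We then construct mutually inverse homomorphisms $R_n \leftrightarrows M_n$.

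\textbf{The map $R_n \to M_n$.} Send each generator to the element of the same name. To see that this is well defined, we check that every listed relation holds in $M_n$, and hence in $VSLM_n$. The pure-virtual relations are immediate. The relations $\mu_1\nu_i=\nu_i\mu_1$ and $\gamma_1\nu_i=\nu_i\gamma_1$ for $i\ge3$ are among the far-commutation relations of $M_n$. The relation $\mu_1\nu_1\gamma_1=\gamma_1\nu_1\mu_1$ is Lemma~\ref{lemma:rel_connecting_strings}(iv) with $i=1$.

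The two long relations involve $\nu_1\nu_2\mu_1\nu_2\nu_1$. Using $\nu_1\mu_2\nu_1=\nu_2\mu_1\nu_2$ and $\nu_1^2=1$, this word equals $\mu_2$. Hence they are exactly Lemma~\ref{lemma:rel_connecting_strings}(ii) and (iii) with $(i,j)=(1,2)$.

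The last two relations say that $\gamma_1$ commutes with the conjugates $\nu_2\nu_1\nu_3\nu_2\,x\,\nu_2\nu_3\nu_1\nu_2$ for $x\in\{\gamma_1,\mu_1\}$. By the defining formula, these conjugates are $\gamma_3$ and $\mu_3$. Here one checks in $M_n$, by repeated use of $\nu_i x_j\nu_i=\nu_j x_i\nu_j$, that the formula really recovers $\gamma_3$ and $\mu_3$. So these relations are the far-commutation relations $\gamma_1\gamma_3=\gamma_3\gamma_1$ and $\gamma_1\mu_3=\mu_3\gamma_1$.

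\textbf{The map $M_n \to R_n$.} Send $\nu_i\mapsto\nu_i$, and send $\mu_i^{\pm1},\gamma_i$ to their defining expressions in $R_n$. We must verify that each relation of Definition~\ref{def:Monoid M_n} holds in $R_n$.

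The relations $\mu_i\mu_i^{-1}=1$ are immediate from $\nu_k^2=1$. The relations $\nu_i x_j\nu_i=\nu_j x_i\nu_j$ follow as in part (e) of the proof of Theorem~\ref{thm:reduced}. The relations $\nu_i x_j=x_j\nu_i$ for $|i-j|\ge2$ follow verbatim from part (f) there. The relation $\mu_i\nu_i\gamma_i=\gamma_i\nu_i\mu_i$ follows by conjugating the base relation, as in part (c). For this we use that the conjugating words for $\mu_i$ and $\gamma_i$ are inverse to each other, and that $\nu_i$ is carried to $\nu_1$ under this conjugation; the latter is a pure virtual identity in $S_n$-type words.

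The relations (ii) and (iii) for general adjacent $i,j$ reduce to the case $(1,2)$, and the commutations $\gamma_i\gamma_j$ and $\mu_i\gamma_j$ reduce to $(1,3)$. In both cases we conjugate by suitable virtual words, exactly as in part (g) and in part (b), the latter being the analogue of \cite[Lemma 27]{CaprauNasser}.

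The two composites fix all generators, up to the identities $\nu_1\nu_2\mu_1\nu_2\nu_1=\mu_2$ etc.\ already established. Hence they are the identity maps, and Theorem~\ref{theorem:iso} then transfers the presentation to $VSLM_n$.

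\textbf{Main obstacle.} The hardest step is deriving the far-commutation relations $\gamma_i\gamma_j=\gamma_j\gamma_i$ and $\mu_i\gamma_j=\gamma_j\mu_i$ for arbitrary $|i-j|\ge2$ from the single case $(1,3)$. It is also the only step where no explicit computation in the paper can be copied; part (b) of Theorem~\ref{thm:reduced} only cites \cite[Lemma 27]{CaprauNasser}.

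I would first reduce to $i=1$ by conjugating with $(\nu_{i-1}\cdots\nu_1)(\nu_i\cdots\nu_2)$. Then I would use the far-commutation $\nu_k x_1=x_1\nu_k$ for $k\ge3$ to move $\gamma_j$ to $\gamma_3$. Concretely, for $j\ge3$, the element $\gamma_j$ is a conjugate of $\gamma_3$ by a virtual word in $\nu_3,\dots,\nu_{j}$, and each of these letters commutes with $x_1$.

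For the cases $(1,2)$, the braid-like relations are likewise propagated by conjugation, using identity~\eqref{eqs_virtuals_prep}.
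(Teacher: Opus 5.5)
Your argument breaks at the last step, the appeal to Theorem~\ref{theorem:iso}, and this cannot be repaired: that theorem is false, and so is the proposition. Neither $M_n$ nor the reduced presentation contains the image of the relation $\ell_1^2=1$.

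Under the dictionary you (and the paper) use, $\mu_1=\ell_1\nu_1$ and $\mu_1^{-1}=\nu_1\ell_1$. So $VSLM_n$ satisfies $(\mu_1\nu_1)^2=\ell_1^2=1$, equivalently $\nu_1\mu_1\nu_1=\mu_1^{-1}$. This fails in your $R_n$. Every listed relation has the same exponent sum in $\mu_1$ on both sides, so $\mu_1\mapsto 1$, $\mu_1^{-1}\mapsto -1$, $\nu_i,\gamma_1\mapsto 0$ defines a homomorphism $R_n\to(\mathbb{Z},+)$ that sends $(\mu_1\nu_1)^2$ to $2$.

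Consequently $\ell_1\mapsto\mu_1\nu_1$ does not define a homomorphism $VSLM_n\to R_n$. This is exactly where the map $g$ in the proof of Theorem~\ref{theorem:iso} fails to be well defined. The natural map $R_n\to VSLM_n$ is surjective but not injective.

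No other isomorphism exists either. The largest commutative quotient of $R_n$ is $\mathbb{Z}/2\times\mathbb{Z}\times\mathbb{N}$, generated by the images of the $\nu_i$, of $\mu_1^{\pm1}$, and of $\gamma_1$. That of $VSLM_n$ is $(\mathbb{Z}/2)^2\times\mathbb{N}$, generated by the common images of the $\nu_i$, $\ell_i$, $\tau_i$. Their groups of units differ.

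Your verification of $R_n\cong M_n$ follows the same Tietze-plus-detour strategy as the paper. The paper's own sketch also implicitly rests on $M_n\cong VSLM_n$, so it has the same defect.

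The correct statement adds the relation $\nu_1\mu_1\nu_1=\mu_1^{-1}$, which makes the generator $\mu_1^{-1}$ redundant. Correspondingly, $\nu_i\mu_i\nu_i=\mu_i^{-1}$ must be added to $M_n$. With that, your two-sided verification goes through.

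This relation is also needed inside your own reduction step. Conjugating by a virtual word that places strands $1,2,3$ at $i,i+1,i+2$ only yields the instances of relation~(iii) with $j=i+1$. The instances with $j=i-1$ are the other orientation. Rewriting with $\ell_i=\mu_i\nu_i$ and $\tau_i=\gamma_i\nu_i$, the $(1,2)$ instance is \eqref{eqs21} and the $(2,1)$ instance is \eqref{eqs20}, both with $i=1$. The latter follows from the former via $\ell_i^2=1$, as in Remark~\ref{RK6}, not by a detour move.

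For relation~(ii) there is no such issue. After using $\nu_1\mu_2\nu_1=\nu_2\mu_1\nu_2$, the $(1,2)$ and $(2,1)$ instances are the same relation.
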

\begin{proof}
The proof is analogous to that of Theorem~\ref{thm:reduced}. Since the generators $\mu_i$ and $\gamma_i$ for $i\geq 2$ are defined in terms of $\mu_1$ and $\gamma_1$, and the virtual generators, they need not be included among the generators of the reduced presentation. On the other hand, all relations involving only the virtual generators must be retained.

 The relations $\nu_i \mu_j \nu_i= \nu_j \mu_i \nu_j$ and $\nu_i \gamma_j \nu_i= \nu_j \gamma_i \nu_j$ for $|i-j|=1$ are omitted, since they are used to define the generators $\mu_{i+1}$ and $\gamma_{i+1}$. 
 
 From relations $\mu_i\mu_i^{-1}=1=\mu_i^{-1}\mu_i$ and $\mu_i \nu_i\gamma_i =\gamma_i \nu_i \mu_i$, we only need $\mu_1\mu_1^{-1}=1=\mu_1^{-1}\mu_1$  and $\mu_1 \nu_1 \gamma_1 =\gamma_1\nu_1 \mu_1$, respectively.
  
 For relations involving two indices, we assume that they occur in the leftmost part of the braid and therefore choose the smallest possible values of the indices. In particular, for relations with $|i-j|=1$, we take $i = 1$ and $j=2$, replacing $\mu_2$ and $\gamma_2$ by their expressions in terms of $\mu_1, \gamma_1$, and the virtual generators. Likewise, for relations with $|i-j|\geq2$, we take $i=1$ and $j=3$, replacing $\mu_3$ and $\gamma_3$ accordingly. For example, $\mu_2$ is replaced by $\nu_1\nu_2\mu_{1}\nu_2\nu_1$, while $\mu_3$ is replaced by $\nu_2 \nu_1 \nu_3 \nu_2 \mu_1 \nu_2 \nu_3 \nu_1 \nu_2$. The last four relations in the statement of the proposition account for all commuting relations with $|i-j| \geq 2$.
 
 Finally, any relation occurring elsewhere in the braid follows from these leftmost relations by repeated applications of the detour move—that is, by `sliding' strands across any portion of the braid using only virtual generators—as in the proof of Theorem~\ref{thm:reduced}. This completes the proof.
 \end{proof}

\section{On Extending Representations of $L_n$ to $SLM_n$ and $VSLM_n$} \label{rep-extensions}

In this section, we work with the presentation for the monoid $VSLM_n$ as given in Definition~\ref{monoid-original_presentation}. In order to extend any representation $\lambda$ of $L_n$ to $SLM_n$ and $VSLM_n$, we need to construct maps that extend $\lambda$ to the other families of generators in such a way that the defining relations of $SLM_n$ and $VSLM_n$ are preserved. There may exist several possible types of extension. In this work, we consider two types of extension for certain families of representations of $L_n$ to $SLM_n$ and $VSLM_n$. We first give the concept of $k$-local representations of a group $G$ for an integer $k$.

\begin{definition} \cite{M.N1} \label{deflocal}
Let $G$ be a group with generators $g_1,g_2,\ldots, g_{n-1}$. A matrix representation $\lambda: G \longrightarrow \mathrm{GL}_{m}(\mathbb{Z}[t^{\pm 1}])$, where $t$ is an indeterminate, is called \emph{$k$-local} if the image of each generator $g_i$ has the block form
\[
\lambda(g_i)=
\begin{pmatrix}
I_{i-1} & 0 & 0 \\
0 & \left( M_i\right) & 0 \\
0 & 0 & I_{n-i-1}
\end{pmatrix},
\qquad 1\le i \le n-1,
\]
where $M_i \in \mathrm{GL}_{k}(\mathbb{Z}[t^{\pm 1}])$, $k = m-n+2$, and $I_r$ denotes the $r\times r$ identity matrix. The representation $\lambda$ is said to be homogeneous if all the matrices $M_i$ are identical. 
\end{definition}

\begin{remark}
If $G$ in Definition \ref{deflocal} is assumed to be a monoid, then the $k$-local representation $\lambda$ naturally induces a representation of $G$ into $\mathrm{M}_{m}\!\left(\mathbb{Z}[t^{\pm 1}]\right)$.
\end{remark}
 
\begin{remark}
The notion of a $k$-local representation $\lambda$ of a group $G$ over the field of complex numbers $\mathbb{C}$ can be defined in the same manner.
\end{remark}

Note that the concept of $k$-local representations can be extended to a group $G$ generated by $s(n-1)$ elements, where these generators are partitioned into $s$ distinct families, each consisting of $n-1$ generators. For simplicity, we introduce the case $s=2$ in the next definition.

\begin{definition} \cite{M.N1} 
Let $G$ be a group generated by the generators $g_1,g_2,\ldots,g_{n-1}$ and $h_1,h_2,\ldots,h_{n-1}$.  
A $k$-local representation $\lambda: G \longrightarrow \mathrm{GL}_{m}(\mathbb{Z}[t^{\pm 1}])$ is a representation such that
\[
\lambda(g_i)=
\begin{pmatrix}
I_{i-1} & 0 & 0 \\
0 & \left( M_i \right) & 0 \\
0 & 0 & I_{\,n-i-1}
\end{pmatrix}
\quad \text{and} \quad
\lambda(h_i)=
\begin{pmatrix}
I_{i-1} & 0 & 0 \\
0 & \left( N_i\right)  & 0 \\
0 & 0 & I_{\,n-i-1}
\end{pmatrix}
\]
for $1 \leq i \leq n-1$, where $M_i, N_i \in \mathrm{GL}_k(\mathbb{Z}[t^{\pm 1}])$, $k=m-n+2$, and $I_r$ denotes the $r \times r$ identity matrix. The representation $\lambda$ is said to be homogeneous if all the matrices $M_i$ are identical and all the matrices $N_i$ are identical.
\end{definition}

We now introduce two types of extension of $L_n$ to $SLM_n$ and $VSLM_n$. 
The first type, called the $k$-local type extension, applies to $k$-local representations of $L_n$. The second type, called the $\Phi$-type extension, applies to general representations of $L_n$ under certain conditions and is defined in a manner similar to that given by Bardakov et al. in \cite[Proposition 4.1]{Bar2024}. We start with the following definition of the first type.

\begin{definition}
Let $\lambda: L_n \longrightarrow \mathrm{GL}_{m}(\mathbb{Z}[t^{\pm 1}])$, where $t$ is an indeterminate, be a $k$-local representation. 
A $k$-local type extension of $\lambda$ to $SLM_n$ (respectively $VSLM_n$) is a $k$-local representation of $SLM_n$ (respectively $VSLM_n$) whose restriction to $L_n$ coincides with $\lambda$.
\end{definition}

We now proceed to the second type of extension. The following proposition explains how a representation of $L_n$ can be extended to $SLM_n$ through this type of extension.

\begin{proposition} \label{phitype}
Let $\lambda: L_n \longrightarrow \mathrm{GL}_{m}(\mathbb{Z}[t^{\pm 1}])$, where $t$ is an indeterminate, be a representation satisfying 
$\lambda(\ell_i)\lambda(\ell_j)=\lambda(\ell_j)\lambda(\ell_i)$, for all $|i-j|\geq 2$. 
Define a map $\Phi: SLM_n \longrightarrow \mathrm{M}_{m}(\mathbb{Z}[t^{\pm 1}])$ by
\[
\Phi(\ell_i)=\lambda(\ell_i)
\]
and
\[
\Phi(\tau_i)=a\lambda(\ell_i)+bI_m,
\]
where $a,b \in \mathbb{Z}[t^{\pm 1}]$. Then $\Phi$ defines a representation of $SLM_n$ that extends $\lambda$. Such an extension is called a $\Phi$-type extension.
\end{proposition}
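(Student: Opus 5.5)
Since $SLM_n$ is given by generators and relations, a map defined on the generators extends to a monoid homomorphism into $\mathrm{M}_m(\mathbb{Z}[t^{\pm1}])$ exactly when the images satisfy every defining relation. So the plan is to check relations \eqref{eqs15}--\eqref{eqs21} for the matrices $\Phi(\ell_i)=\lambda(\ell_i)$ and $\Phi(\tau_i)=a\lambda(\ell_i)+bI_m$. Once this is done, $\Phi$ is a representation of $SLM_n$, and it extends $\lambda$ by construction.

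Write $L_i:=\lambda(\ell_i)$. Relations \eqref{eqs15} and \eqref{eqs16} involve only the $\ell_i$, so they hold because $\lambda$ is a representation of $L_n$. For \eqref{eqs19}, $\Phi(\tau_i)$ is a polynomial in $L_i$, so it commutes with $L_i$.

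For the far-commutation relations \eqref{eqs17} and \eqref{eqs18}, with $|i-j|\ge 2$, this is where the hypothesis is used. It is needed here because $L_n$ itself has no far-commutation relation. From $L_iL_j=L_jL_i$ we get
\[
(aL_i+bI_m)L_j=L_j(aL_i+bI_m),
\]
and, expanding the product,
\[
(aL_i+bI_m)(aL_j+bI_m)=a^2L_iL_j+abL_i+abL_j+b^2I_m,
\]
which is symmetric in $i,j$.

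The remaining relations are \eqref{eqs20} and \eqref{eqs21}. By Remark~\ref{RK6} they are equivalent, so it suffices to verify \eqref{eqs20}: $L_iL_{i+1}(aL_i+bI_m)=(aL_{i+1}+bI_m)L_iL_{i+1}$. By linearity this splits into the $b$-part $L_iL_{i+1}=L_iL_{i+1}$, which is trivial, and the $a$-part $L_iL_{i+1}L_i=L_{i+1}L_iL_{i+1}$, which is \eqref{eqs16}. I expect no step to be a real obstacle. The only point needing care is to note that the far-commutation hypothesis enters in exactly one place, \eqref{eqs17}--\eqref{eqs18}, and that everything else follows from the relations of $L_n$ and the polynomial form of $\Phi(\tau_i)$.
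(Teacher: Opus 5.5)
Your proposal is correct and matches the paper's proof: you check the relations one by one, using the far-commutation hypothesis for \eqref{eqs17}--\eqref{eqs18}, the braid relation \eqref{eqs16} for the $a$-part of \eqref{eqs20}, and Remark~\ref{RK6} to dispense with \eqref{eqs21}. The only differences are cosmetic: for \eqref{eqs19} you note that $aL_i+bI_m$ is a polynomial in $L_i$, whereas the paper expands both sides using $L_i^2=I_m$; and you also check \eqref{eqs15}--\eqref{eqs16} explicitly, which the paper takes as given from $\lambda$.
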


\begin{proof}
To prove that $\Phi$ defines a representation of $SLM_n$, it suffices to verify that it preserves the relations \eqref{eqs17}--\eqref{eqs21}. We examine these relations one by one as follows.

\begin{itemize}

\item Consider the relation $\tau_i\tau_j=\tau_j\tau_i$ for $|i-j|\geq 2$. We compute
\[
\Phi(\tau_i)\Phi(\tau_j)
=(a\lambda(\ell_i)+bI_m)(a\lambda(\ell_j)+bI_m).
\]
Expanding the product yields
\[
\Phi(\tau_i)\Phi(\tau_j)
=a^2\lambda(\ell_i)\lambda(\ell_j)
+ab\,\lambda(\ell_i)
+ba\,\lambda(\ell_j)
+b^2I_m.
\]
Similarly, we compute
\[
\Phi(\tau_j)\Phi(\tau_i)
=(a\lambda(\ell_j)+bI_m)(a\lambda(\ell_i)+bI_m),
\]
and expanding this product gives
\[
\Phi(\tau_j)\Phi(\tau_i)
=a^2\lambda(\ell_j)\lambda(\ell_i)
+ab\,\lambda(\ell_j)
+ba\,\lambda(\ell_i)
+b^2I_m.
\]
Since $\lambda(\ell_i)\lambda(\ell_j)=\lambda(\ell_j)\lambda(\ell_i)$ whenever $|i-j|\geq 2$ by our assumption, it follows that
\[
\Phi(\tau_i)\Phi(\tau_j)=\Phi(\tau_j)\Phi(\tau_i).
\]
Thus, this relation is preserved.\vspace{0.1cm}

\item Consider the relation $\tau_i\ell_j=\ell_j\tau_i$ for $|i-j|\geq 2$. We compute
\[
\Phi(\tau_i)\Phi(\ell_j)=(a\lambda(\ell_i)+bI_m)\lambda(\ell_j),
\]
which gives
\[
\Phi(\tau_i)\Phi(\ell_j)=a\lambda(\ell_i)\lambda(\ell_j)+b\lambda(\ell_j).
\]
On the other hand, we compute
\[
\Phi(\ell_j)\Phi(\tau_i)=\lambda(\ell_j)(a\lambda(\ell_i)+bI_m),
\]
and expanding yields
\[
\Phi(\ell_j)\Phi(\tau_i)=a\lambda(\ell_j)\lambda(\ell_i)+b\lambda(\ell_j).
\]
Again, $\lambda(\ell_i)\lambda(\ell_j)=\lambda(\ell_j)\lambda(\ell_i)$ for $|i-j|\geq 2$ by our assumption, and so
\[
\Phi(\tau_i)\Phi(\ell_j)=\Phi(\ell_j)\Phi(\tau_i).
\]
Hence, this relation is also preserved.\vspace{0.1cm}
\item Consider the relation $\tau_i\ell_i=\ell_i\tau_i$ for $1\leq i\leq n-1$. We compute
\[
\Phi(\tau_i)\Phi(\ell_i)=(a\lambda(\ell_i)+bI_m)\lambda(\ell_i).
\]
Using the fact that $\lambda$ is a representation of $L_n$ and $\ell_i^2=1$, we obtain
\[
\Phi(\tau_i)\Phi(\ell_i)=aI_m+b\lambda(\ell_i).
\]
Similarly, we compute
\[
\Phi(\ell_i)\Phi(\tau_i)=\lambda(\ell_i)(a\lambda(\ell_i)+bI_m),
\]
which also gives
\[
\Phi(\ell_i)\Phi(\tau_i)=aI_m+b\lambda(\ell_i).
\]
Therefore,
\[
\Phi(\tau_i)\Phi(\ell_i)=\Phi(\ell_i)\Phi(\tau_i),
\]
and this relation also holds.\vspace{0.1cm}

\item Consider the relation $\ell_i\ell_{i+1}\tau_i=\tau_{i+1}\ell_i\ell_{i+1}$ for $1\leq i\leq n-2$. We compute
\[
\Phi(\ell_i)\Phi(\ell_{i+1})\Phi(\tau_i)
=\lambda(\ell_i)\lambda(\ell_{i+1})(a\lambda(\ell_i)+bI_m).
\]
Expanding this expression gives
\[
\Phi(\ell_i)\Phi(\ell_{i+1})\Phi(\tau_i)
=a\lambda(\ell_i)\lambda(\ell_{i+1})\lambda(\ell_i)
+b\lambda(\ell_i)\lambda(\ell_{i+1}).
\]
On the other hand, we compute
\[
\Phi(\tau_{i+1})\Phi(\ell_i)\Phi(\ell_{i+1})
=(a\lambda(\ell_{i+1})+bI_m)\lambda(\ell_i)\lambda(\ell_{i+1}),
\]
and expanding yields
\[
\Phi(\tau_{i+1})\Phi(\ell_i)\Phi(\ell_{i+1})
=a\lambda(\ell_{i+1})\lambda(\ell_i)\lambda(\ell_{i+1})
+b\lambda(\ell_i)\lambda(\ell_{i+1}).
\]
Since $\lambda$ is a representation and the generators of $L_n$ satisfy
\[
\ell_i\ell_{i+1}\ell_i=\ell_{i+1}\ell_i\ell_{i+1},
\]
it follows that
\[
\lambda(\ell_i)\lambda(\ell_{i+1})\lambda(\ell_i)
=
\lambda(\ell_{i+1})\lambda(\ell_i)\lambda(\ell_{i+1}).
\]
Consequently,
\[
\Phi(\ell_i)\Phi(\ell_{i+1})\Phi(\tau_i)
=
\Phi(\tau_{i+1})\Phi(\ell_i)\Phi(\ell_{i+1}),
\]
and the relation is preserved.
\end{itemize}
Remark that the relations \eqref{eqs20} and \eqref{eqs21} are equivalent by Remark \ref{RK6}, and this completes the proof.
\end{proof}

\begin{question}
Under what conditions on the parameters $a,b\in \mathbb{Z}[t^{\pm1
}]$ does the representation $\Phi$ of $SLM_n$ in Proposition \ref{phitype} become faithful?
\end{question}

The next proposition extends Proposition~\ref{phitype} to the virtual singular triplet monoid $VSLM_n$.

\begin{proposition} \label{phitype222}
Let $\lambda: L_n \longrightarrow \mathrm{GL}_{m}(\mathbb{Z}[t^{\pm 1}])$, where $t$ is an indeterminate, be a representation satisfying 
$\lambda(\ell_i)\lambda(\ell_j)=\lambda(\ell_j)\lambda(\ell_i)$ for all $|i-j|\geq 2$. 
Define a map $\Phi: VSLM_n \longrightarrow \mathrm{M}_{m}(\mathbb{Z}[t^{\pm 1}])$ by
\[
\Phi(\ell_i)=\lambda(\ell_i),
\]
\[
\Phi(\tau_i)=a\lambda(\ell_i)+bI_m,
\]
and 
\[
\Phi(\nu_i)=-\lambda(\ell_i),
\]
where $a,b \in \mathbb{Z}[t^{\pm 1}]$. Then $\Phi$ defines a representation of $VSLM_n$ that extends $\lambda$. Such an extension is also called a $\Phi$-type extension.
\end{proposition}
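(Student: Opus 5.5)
Since $\Phi$ agrees on $\ell_i$ and $\tau_i$ with the map of Proposition~\ref{phitype}, the relations \eqref{eqs15}--\eqref{eqs21} are already preserved by that proposition. It remains to check the purely virtual and mixed relations \eqref{eqs22}--\eqref{eqs28}. The plan is to write $L_i:=\lambda(\ell_i)$, so that $\Phi(\nu_i)=-L_i$ and $\Phi(\tau_i)=aL_i+bI_m$. We then reduce every relation to the facts $L_i^2=I_m$, $L_iL_{i+1}L_i=L_{i+1}L_iL_{i+1}$, and $L_iL_j=L_jL_i$ for $|i-j|\geq 2$, the last being the standing hypothesis.

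We first treat the relations involving only $\nu$'s and $\ell$'s. For \eqref{eqs22}, $(-L_i)^2=L_i^2=I_m$. For \eqref{eqs23}, each side carries three minus signs, so the relation reduces to \eqref{eqs16} for $\lambda$. For \eqref{eqs24}, $(-L_i)L_{i+1}(-L_i)=L_iL_{i+1}L_i$ and $(-L_{i+1})L_i(-L_{i+1})=L_{i+1}L_iL_{i+1}$, and these agree by \eqref{eqs16}. Relations \eqref{eqs26} and \eqref{eqs27} follow immediately from the commutativity hypothesis, since the signs factor out as scalars.

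For the relations involving $\tau$: \eqref{eqs28} holds because $\Phi(\tau_i)$ is a polynomial in $L_i$, which commutes with $L_j$ for $|i-j|\ge 2$. For \eqref{eqs25}, expand by linearity:
\[
\Phi(\nu_i)\Phi(\tau_{i+1})\Phi(\nu_i)=L_i(aL_{i+1}+bI_m)L_i=aL_iL_{i+1}L_i+bI_m,
\]
and symmetrically $\Phi(\nu_{i+1})\Phi(\tau_i)\Phi(\nu_{i+1})=aL_{i+1}L_iL_{i+1}+bI_m$. These are equal by the braid-type relation for $\lambda$.

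The only point needing care, and the one I expect to be the closest thing to an obstacle, is keeping track of the signs introduced by $\Phi(\nu_i)=-L_i$. Every relation among \eqref{eqs22}--\eqref{eqs28} has the same parity of $\nu$-occurrences on both sides: two $\nu$'s in the conjugations, equal numbers in \eqref{eqs23} and \eqref{eqs26}, and one on each side in \eqref{eqs27} and \eqref{eqs28}. Hence the signs always cancel, and nothing beyond $L_i^2=I_m$, \eqref{eqs16}, and the commutativity hypothesis is used. Finally, $\Phi$ is well defined as a monoid homomorphism to $\mathrm{M}_m(\mathbb{Z}[t^{\pm1}])$ and restricts to $\lambda$ on $L_n$, which completes the argument.
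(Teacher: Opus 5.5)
Your proof is correct and matches the paper's argument. Both invoke Proposition~\ref{phitype} for relations \eqref{eqs15}--\eqref{eqs21}, then check \eqref{eqs22}--\eqref{eqs28} one by one using only $\lambda(\ell_i)^2=I_m$, the braid-type relation \eqref{eqs16}, and the commutativity hypothesis; your sign-parity remark simply summarizes the cancellations the paper tracks case by case.
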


\begin{proof}
By Proposition \ref{phitype}, it remains to verify that $\Phi$ preserves the relations involving the generators $\nu_i$, namely \eqref{eqs22}--\eqref{eqs28}. We deal with these relations separately as follows. 
\begin{itemize}
\item Consider the relation $\nu_i^2=1$ for $1\leq i\leq n-1$. We compute
\[
\Phi(\nu_i)^2=(-\lambda(\ell_i))^2=\lambda(\ell_i)^2.
\]
Since $\lambda$ is a representation of $L_n$, we have $\lambda(\ell_i)^2=I_m$. Hence
\[
\Phi(\nu_i)^2=I_m,
\]
and so this relation is preserved.\vspace{0.1cm}

\item Consider the relation $\nu_i\nu_{i+1}\nu_i=\nu_{i+1}\nu_i\nu_{i+1}$ for $1\leq i\leq n-2$. We compute
\[
\Phi(\nu_i)\Phi(\nu_{i+1})\Phi(\nu_i)
=(-\lambda(\ell_i))(-\lambda(\ell_{i+1}))(-\lambda(\ell_i))
=-\lambda(\ell_i)\lambda(\ell_{i+1})\lambda(\ell_i).
\]
Similarly, we compute
\[
\Phi(\nu_{i+1})\Phi(\nu_i)\Phi(\nu_{i+1})
=(-\lambda(\ell_{i+1}))(-\lambda(\ell_i))(-\lambda(\ell_{i+1}))
=-\lambda(\ell_{i+1})\lambda(\ell_i)\lambda(\ell_{i+1}).
\]
Since $\lambda$ is a representation and the generators of $L_n$ satisfy
\[
\ell_i\ell_{i+1}\ell_i=\ell_{i+1}\ell_i\ell_{i+1},
\]
it follows that the two expressions are equal. Hence, this relation holds.\vspace{0.1cm}

\item Consider the relation $\nu_i\ell_{i+1}\nu_i=\nu_{i+1}\ell_i\nu_{i+1}$ for $1\leq i\leq n-2$. We compute
\[
\Phi(\nu_i)\Phi(\ell_{i+1})\Phi(\nu_i)
=(-\lambda(\ell_i))\lambda(\ell_{i+1})(-\lambda(\ell_i))
=\lambda(\ell_i)\lambda(\ell_{i+1})\lambda(\ell_i).
\]
On the other hand, we compute
\[
\Phi(\nu_{i+1})\Phi(\ell_i)\Phi(\nu_{i+1})
=(-\lambda(\ell_{i+1}))\lambda(\ell_i)(-\lambda(\ell_{i+1}))
=\lambda(\ell_{i+1})\lambda(\ell_i)\lambda(\ell_{i+1}).
\]
Using again the relations in $L_n$, the two expressions are equal and therefore, this relation is also preserved.\vspace{0.1cm}

\item Consider the relation $\nu_i\tau_{i+1}\nu_i=\nu_{i+1}\tau_i\nu_{i+1}$ for $1\leq i\leq n-2$. We compute
\[
\Phi(\nu_i)\Phi(\tau_{i+1})\Phi(\nu_i)
=(-\lambda(\ell_i))(a\lambda(\ell_{i+1})+bI_m)(-\lambda(\ell_i)).
\]
Expanding the product, setting $\lambda(\ell_i)^2=I_m$, gives
\[
\Phi(\nu_i)\Phi(\tau_{i+1})\Phi(\nu_i)
=a\lambda(\ell_i)\lambda(\ell_{i+1})\lambda(\ell_i)+bI_m.
\]
Similarly, we compute
\[
\Phi(\nu_{i+1})\Phi(\tau_i)\Phi(\nu_{i+1})
=(-\lambda(\ell_{i+1}))(a\lambda(\ell_i)+bI_m)(-\lambda(\ell_{i+1})),
\]
which yields
\[
\Phi(\nu_{i+1})\Phi(\tau_i)\Phi(\nu_{i+1})
=a\lambda(\ell_{i+1})\lambda(\ell_i)\lambda(\ell_{i+1})+bI_m.
\]
Again, using the relations in $L_n$, we find that the equality follows and the relation is preserved.\vspace{0.1cm}

\item Consider the relation $\nu_i\nu_j=\nu_j\nu_i$ for $|i-j|\geq 2$. We compute
\[
\Phi(\nu_i)\Phi(\nu_j)
=(-\lambda(\ell_i))(-\lambda(\ell_j))
=\lambda(\ell_i)\lambda(\ell_j)
\]
and
\[
\Phi(\nu_j)\Phi(\nu_i)
=(-\lambda(\ell_j))(-\lambda(\ell_i))
=\lambda(\ell_j)\lambda(\ell_i).
\]
By our assumption, we have $\lambda(\ell_i)\lambda(\ell_j)=\lambda(\ell_j)\lambda(\ell_i)$ for all $|i-j|\geq 2$, and so the equality holds, which implies that this relation is preserved.\vspace{0.1cm}

\item Consider the relation $\nu_i\ell_j=\ell_j\nu_i$ for $|i-j|\geq 2$. We compute
\[
\Phi(\nu_i)\Phi(\ell_j)
=(-\lambda(\ell_i))\lambda(\ell_j)
=-\lambda(\ell_i)\lambda(\ell_j)
\]
and
\[
\Phi(\ell_j)\Phi(\nu_i)
=\lambda(\ell_j)(-\lambda(\ell_i))
=-\lambda(\ell_j)\lambda(\ell_i).
\]
Again, by our assumption we have $\lambda(\ell_i)\lambda(\ell_j)=\lambda(\ell_j)\lambda(\ell_i)$ for all $|i-j|\geq 2$, which implies that the equality follows, and so the relation is preserved.\vspace{0.1cm}

\item Finally, consider the relation $\nu_i\tau_j=\tau_j\nu_i$ for $|i-j|\geq 2$. We compute
\[
\Phi(\nu_i)\Phi(\tau_j)
=(-\lambda(\ell_i))(a\lambda(\ell_j)+bI_m)
=-a\lambda(\ell_i)\lambda(\ell_j)-b\lambda(\ell_i)
\]
and
\[
\Phi(\tau_j)\Phi(\nu_i)
=(a\lambda(\ell_j)+bI_m)(-\lambda(\ell_i))
=-a\lambda(\ell_j)\lambda(\ell_i)-b\lambda(\ell_i).
\]
Since, by our assumption, we have  $\lambda(\ell_i)\lambda(\ell_j)=\lambda(\ell_j)\lambda(\ell_i)$ for all $|i-j|\geq 2$, we conclude that $
\Phi(\nu_i)\Phi(\tau_j)=\Phi(\tau_j)\Phi(\nu_i)$, and so this relation is also preserved.
\end{itemize}
Therefore, $\Phi$ preserves all the relations \eqref{eqs22}--\eqref{eqs28}. Consequently, $\Phi$ defines a representation of $VSLM_n$ extending $\lambda$.
\end{proof}


We end this section with the following corollary.

\begin{corollary}
Every $2$-local representation of $L_n$ admits an extension to $SLM_n$ and $VSLM_n$ via the $\Phi$-type extensions described in Propositions \ref{phitype} and \ref{phitype222}.
\end{corollary}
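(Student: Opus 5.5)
The plan is to reduce the corollary to the single hypothesis of Propositions~\ref{phitype} and~\ref{phitype222}. Those propositions need only that $\lambda$ is a representation of $L_n$ with $\lambda(\ell_i)\lambda(\ell_j)=\lambda(\ell_j)\lambda(\ell_i)$ whenever $|i-j|\geq 2$. So it suffices to show that every $2$-local representation $\lambda$ of $L_n$ satisfies this commutation condition.

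Let $\lambda:L_n\to \mathrm{GL}_m(\mathbb{Z}[t^{\pm1}])$ be $2$-local. By Definition~\ref{deflocal}, $k=m-n+2=2$, so $m=n$, and each image has the block form
\[
\lambda(\ell_i)=\mathrm{diag}\bigl(I_{i-1},\,M_i,\,I_{n-i-1}\bigr),
\]
where $M_i\in \mathrm{GL}_2(\mathbb{Z}[t^{\pm1}])$ sits in rows and columns $i,i+1$. Fix $i,j$ with $|i-j|\geq 2$, and assume without loss of generality that $j\geq i+2$.

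The index sets $\{i,i+1\}$ and $\{j,j+1\}$ are then disjoint. This lets us write both matrices as block diagonal with respect to a common decomposition of $\{1,\dots,n\}$ into the consecutive blocks
\[
\{1,\dots,i-1\},\ \{i,i+1\},\ \{i+2,\dots,j-1\},\ \{j,j+1\},\ \{j+2,\dots,n\},
\]
where the middle block may be empty. In this decomposition:
\begin{itemize}
\item $\lambda(\ell_i)$ equals $M_i$ on the second block and the identity elsewhere;
\item $\lambda(\ell_j)$ equals $M_j$ on the fourth block and the identity elsewhere.
\end{itemize}
Block diagonal matrices with respect to the same decomposition multiply blockwise. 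In every block, at least one factor is an identity matrix, so the blockwise products commute. Hence both products equal $\mathrm{diag}(I_{i-1},M_i,I_{j-i-2},M_j,I_{n-j-1})$, and the commutation condition holds.

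With this condition established, Proposition~\ref{phitype} yields the extension $\Phi$ to $SLM_n$, and Proposition~\ref{phitype222} yields the extension $\Phi$ to $VSLM_n$, for any $a,b\in\mathbb{Z}[t^{\pm1}]$.

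There is no real obstacle here. The only point requiring care is the index bookkeeping that makes the two $2\times 2$ blocks disjoint, which holds exactly because $|i-j|\geq 2$. The same argument works verbatim for $k$-local representations whenever the supports $\{i,\dots,i+k-1\}$ and $\{j,\dots,j+k-1\}$ are disjoint; for $k=2$ this is automatic.
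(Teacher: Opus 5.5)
Your proof is correct and follows the same route as the paper. The paper also reduces the corollary to the far-commutation hypothesis $\lambda(\ell_i)\lambda(\ell_j)=\lambda(\ell_j)\lambda(\ell_i)$ for $|i-j|\geq 2$ and reads it off from the block form of a $2$-local representation; you simply write out the disjoint-block computation that the paper leaves implicit.
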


\begin{proof}
Let $\lambda$ be a $2$-local representation of $L_n$. By the block matrix form of $\lambda$, such a representation satisfies $
\lambda(\ell_i)\lambda(\ell_j)=\lambda(\ell_j)\lambda(\ell_i)$
for all $|i-j|\geq 2$. Therefore, the assumptions required in Propositions \ref{phitype} and \ref{phitype222} are satisfied. Consequently, $\lambda$ can be extended to representations of $SLM_n$ and $VSLM_n$ through the $\Phi$-type extensions described in those propositions.
\end{proof}

\section{On Extensions of the Representation $\mu$ of $L_n$ to $SLM_n$ and $VSLM_n$}\label{rep-extension_mu}

In this section, we continue to work with the presentation for the monoid $VSLM_n$ as given in Definition~\ref{monoid-original_presentation}. Our aim is to construct representations of $SLM_n$ and $VSLM_n$ over the ring of Laurent polynomials $\mathbb{Z}[t^{\pm 1}]$, where $t$ is an indeterminate. To this end, we consider a representation of $L_n$, namely $\mu: L_n \longrightarrow \mathrm{GL}_n(\mathbb{Z}[t^{\pm 1}])$, introduced in \cite{Nasser2026}, and investigate the possibility of extending it to $SLM_n$ and $VSLM_n$ via two types of extensions given in Section~\ref{rep-extensions}, and compare these extensions. We begin by recalling the definition of the representation $\mu$.

\begin{definition}\cite{Nasser2026}\label{defmu}
The representation $\mu: L_n \longrightarrow \mathrm{GL}_n(\mathbb{Z}[t^{\pm 1}])$, where $t$ is an indeterminate, is defined on the generators $\ell_i$ of $L_n$ as follows:
\[
\mu(\ell_i)=
\begin{pmatrix}
I_{i-1} & 0 & 0 \\
0 &
\begin{pmatrix}
0 & t^k \\
t^{-k} & 0
\end{pmatrix}
& 0 \\
0 & 0 & I_{n-i-1}
\end{pmatrix}, \quad 1\leq i \leq n-1.
\]
\end{definition}



\subsection{The Homogeneous Local Type Extension of $\mu$ to $SLM_n$ and $VSLM_n$}

In this subsection, we find all homogeneous $2$-local type extensions of the representation $\mu$ of $L_n$ to $SLM_n$ and $VSLM_n$ for all $n\geq 3$. The case $n=2$ is a very special case and is left as an exercise.

\begin{theorem} \label{th18}
Let $\mu':SLM_n \longrightarrow \mathrm{M}_n(\mathbb{Z}[t^{\pm 1}])$ be a homogeneous $2$-local representation extending $\mu$. Then $\mu'$ is uniquely determined and acts on the generators of $SLM_n$ as follows. For $1 \leq i \leq n-1$,
\[
\mu'(\ell_i)=
\begin{pmatrix}
I_{i-1} & 0 & 0 \\
0 &
\begin{pmatrix}
0 & t^k \\
t^{-k} & 0
\end{pmatrix}
& 0 \\
0 & 0 & I_{n-i-1}
\end{pmatrix}
\quad \text{and} \quad
\mu'(\tau_i)=
\begin{pmatrix}
I_{i-1} & 0 & 0 \\
0 &
\begin{pmatrix}
w & x \\
t^{-2k}x & w
\end{pmatrix}
& 0 \\
0 & 0 & I_{n-i-1}
\end{pmatrix},
\]
where $w,x \in \mathbb{Z}[t^{\pm 1}]$.
\end{theorem}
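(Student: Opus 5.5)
Since $\mu'$ is homogeneous $2$-local and extends $\mu$, we have $\mu'(\ell_i)=\mu(\ell_i)$, so the block of $\ell_i$ is forced to be $A=\begin{pmatrix}0&t^k\\ t^{-k}&0\end{pmatrix}$. We may also write
\[
\mu'(\tau_i)=\begin{pmatrix} I_{i-1}&0&0\\ 0&T&0\\ 0&0&I_{n-i-1}\end{pmatrix},\qquad T=\begin{pmatrix} a&b\\ c&d\end{pmatrix},
\]
with $a,b,c,d\in\mathbb{Z}[t^{\pm1}]$ the same for all $i$. The plan is to impose the defining relations \eqref{eqs17}--\eqref{eqs21} of $SLM_n$ on these matrices and solve for $a,b,c,d$. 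By Remark~\ref{RK6}, it suffices to check \eqref{eqs20}. The relations \eqref{eqs17} and \eqref{eqs18}, with $|i-j|\ge 2$, hold automatically, because the nontrivial blocks then occupy disjoint rows and columns.

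\emph{Step 1: relation \eqref{eqs19}.} The condition $TA=AT$ is a $2\times2$ computation. We have $TA=\begin{pmatrix} t^{-k}b& t^k a\\ t^{-k}d& t^k c\end{pmatrix}$ and $AT=\begin{pmatrix} t^k c& t^k d\\ t^{-k}a& t^{-k}b\end{pmatrix}$. Comparing entries gives $a=d$ and $c=t^{-2k}b$. Writing $w=a$ and $x=b$ already produces the claimed shape $T=\begin{pmatrix} w&x\\ t^{-2k}x&w\end{pmatrix}$.

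\emph{Step 2: relation \eqref{eqs20}.} We must check $\ell_1\ell_2\tau_1=\tau_2\ell_1\ell_2$. Only the leading $3\times 3$ corner is nontrivial, and by homogeneity and translation the case $i=1$ suffices; for $n\ge 3$ this reduces to a $3\times3$ computation. Let $P=\mu(\ell_1)\mu(\ell_2)$ restricted to that corner. The relation becomes $P\,(T\oplus 1)=(1\oplus T)\,P$, equivalently $(1\oplus T)=P(T\oplus 1)P^{-1}$. Since $P$ is a monomial matrix (a scaled permutation matrix), conjugating $T\oplus1$ by $P$ permutes its entries and rescales them by powers of $t^{k}$. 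I would compute $P$ explicitly. It should send basis vector $e_1\mapsto t^{-k}\cdot e_2$-type and $e_2\mapsto e_3$-type, up to powers of $t$. I then expect the conjugation to carry the block in positions $\{1,2\}$ to positions $\{2,3\}$ with off-diagonal entries rescaled by $t^{\pm 2k}$-type factors. Matching these entries should either be automatic given $c=t^{-2k}b$, or impose the same constraint again. Either way no new condition on $w,x$ should arise.

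The main obstacle is Step 2, specifically keeping track of the powers of $t$ in $P$ to confirm that the rescaling is exactly compatible with $c=t^{-2k}b$. If it is not compatible, one gets extra constraints, such as $x=0$, and the statement would have to be adjusted. I would carry out the $3\times3$ multiplication directly and compare all nine entries. Finally, I would note that "uniquely determined" in the statement means the form is uniquely determined, with $w,x$ free parameters. I would also observe that conversely every such $T$ gives a well-defined representation into $\mathrm{M}_n(\mathbb{Z}[t^{\pm1}])$, since all relations have been verified. Note that $T$ need not be invertible, which is consistent with $\tau_i$ being singular.
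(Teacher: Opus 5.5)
Your argument is essentially the paper's. Step 1 is exactly its computation, and relation \eqref{eqs19} alone already forces $a=d$ and $c=t^{-2k}b$, which is all the stated form requires. The Step 2 you left open closes at once and imposes no condition at all. This contradicts both your predicted $t^{\pm 2k}$ rescaling and the paper's claim that \eqref{eqs20} reproduces the same equations. On the $3\times 3$ corner, $P=\mu(\ell_1)\mu(\ell_2)$ sends $e_1\mapsto t^{-k}e_2$, $e_2\mapsto t^{-k}e_3$ and $e_3\mapsto t^{2k}e_1$. Since $e_1$ and $e_2$ carry the same scalar, $P(T\oplus 1)P^{-1}=1\oplus T$ for every $2\times 2$ block $T$, so every choice of $w,x$ does give a representation.
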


\begin{proof}
Since $\mu'$ extends $\mu$, we must have
\[
\mu'(\ell_i)=\mu(\ell_i)=
\begin{pmatrix}
I_{i-1} & 0 & 0 \\
0 &
\begin{pmatrix}
0 & t^k \\
t^{-k} & 0
\end{pmatrix}
& 0 \\
0 & 0 & I_{n-i-1}
\end{pmatrix},
\qquad 1\leq i\leq n-1 .
\]
Because $\mu'$ is a homogeneous $2$-local representation, we may write
\[
\mu'(\tau_i)=
\begin{pmatrix}
I_{i-1} & 0 & 0 \\
0 &
\begin{pmatrix}
w & x \\
y & z
\end{pmatrix}
& 0 \\
0 & 0 & I_{n-i-1}
\end{pmatrix}, \qquad 1\leq i \leq n-1,
\]
where $w,x,y,z \in \mathbb{Z}[t^{\pm 1}]$. Now, since $\mu'$ is a representation of $SLM_n$, it must preserve the defining relations of $SLM_n$. The relations involving $\ell_i$ and $\tau_i$ that are relevant here are
\[
\ell_1\tau_1=\tau_1\ell_1
\quad \text{and} \quad
\ell_1\ell_2\tau_1=\tau_2\ell_1\ell_2.
\]
Applying $\mu'$ to the first relation $\ell_1\tau_1=\tau_1\ell_1$ and comparing the corresponding matrix entries, we obtain
\begin{equation}
t^{-k}x - t^{k}y =0,
\end{equation}
\begin{equation}
t^{k}w - t^{k}z =0 .
\end{equation}
Similarly, applying $\mu'$ to the relation $\ell_1\ell_2\tau_1=\tau_2\ell_1\ell_2$ yields similar equations
\begin{equation}
-t^{-k}w + t^{-k}z =0,
\end{equation}
\begin{equation}
-t^{-k}x + t^{k}y =0 .
\end{equation}
Since $t$ is an indeterminate, the above system of four equations in $w,x,y,$ and $z$ implies that
\[
w=z \quad \text{and} \quad y=t^{-2k}x .
\]
Substituting these relations into $\mu'(\tau_i)$ gives that
\[
\mu'(\tau_i)=
\begin{pmatrix}
I_{i-1} & 0 & 0 \\
0 &
\begin{pmatrix}
w & x \\
t^{-2k}x & w
\end{pmatrix}
& 0 \\
0 & 0 & I_{n-i-1}
\end{pmatrix}, \qquad 1\leq i \leq n-1,
\]
where $w,x \in \mathbb{Z}[t^{\pm 1}]$ and this completes the proof.
\end{proof}

We now proceed to find all homogeneous $2$-local type extensions of $\mu$ to $VSLM_n$.

\begin{theorem} \label{th19}
Let $\mu'':VSLM_n \longrightarrow \mathrm{M}_n(\mathbb{Z}[t^{\pm 1}])$ be a homogeneous $2$-local representation extending $\mu$. Then $\mu''$ is uniquely determined and acts on the generators of $VSLM_n$ as follows. For $1 \leq i \leq n-1$,
$$
\mu''(\ell_i)=
\begin{pmatrix}
I_{i-1} & 0 & 0 \\
0 &
\begin{pmatrix}
0 & t^k \\
t^{-k} & 0
\end{pmatrix}
& 0 \\
0 & 0 & I_{n-i-1}
\end{pmatrix},
$$
$$
\mu''(\tau_i)=
\begin{pmatrix}
I_{i-1} & 0 & 0 \\
0 &
\begin{pmatrix}
w & x \\
t^{-2k}x & w
\end{pmatrix}
& 0 \\
0 & 0 & I_{n-i-1}
\end{pmatrix},
$$
and 
$$
\mu''(\nu_i)=
\begin{pmatrix}
I_{i-1} & 0 & 0 \\
0 &
\begin{pmatrix}
0 & q \\
\dfrac{1}{q} & 0
\end{pmatrix}
& 0 \\
0 & 0 & I_{n-i-1}
\end{pmatrix},
$$
where $w,x,q \in \mathbb{Z}[t^{\pm 1}]$.
\end{theorem}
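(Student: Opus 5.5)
Since $\mu''$ restricts to a homogeneous $2$-local representation of $SLM_n$ extending $\mu$, Theorem~\ref{th18} already fixes $\mu''(\ell_i)$ and forces
\[
\mu''(\tau_i)=I_{i-1}\oplus\begin{pmatrix} w & x\\ t^{-2k}x & w\end{pmatrix}\oplus I_{n-i-1}.
\]
So only the virtual generators remain. By homogeneity and $2$-locality I write $\mu''(\nu_i)=I_{i-1}\oplus N\oplus I_{n-i-1}$ with
\[
N=\begin{pmatrix} p & q\\ r & s\end{pmatrix},\qquad p,q,r,s\in\mathbb{Z}[t^{\pm1}].
\]
Invertibility of $\nu_i$ requires $\det N$ to be a unit. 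The goal is to show that $p=s=0$ and $r=q^{-1}$, so that $q$ is a unit of $\mathbb{Z}[t^{\pm1}]$ (which is how I read ``$\frac1q$'' in the statement).

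The far-commutation relations \eqref{eqs26}--\eqref{eqs28} hold automatically from the block form, so only the relations with adjacent indices matter. I take $n=3$ (indices $1,2$), since by homogeneity the general case reduces to this $3\times 3$ computation. The relations to impose are:
\begin{itemize}
\item \eqref{eqs22}: $N^2=I_2$;
\item \eqref{eqs23}: the braid relation for $\nu_1,\nu_2$;
\item \eqref{eqs24}: $\nu_1\ell_2\nu_1=\nu_2\ell_1\nu_2$;
\item \eqref{eqs25}: $\nu_1\tau_2\nu_1=\nu_2\tau_1\nu_2$.
\end{itemize}
For each I will expand both sides as $3\times3$ matrices and compare entries.

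The cleanest route is to start with \eqref{eqs24}, because its entries are linear in $\mu(\ell)$, which is monomial. Writing $A=\mu''(\ell_1)$, the corner entries of the two sides should give equations such as $p\cdot(\text{stuff})=0$, and since $\mathbb{Z}[t^{\pm1}]$ is a domain I expect to extract $p=0$ or $s=0$ fairly directly. The braid relation \eqref{eqs23} is the standard one for homogeneous $2$-local representations. Its $(1,1)$, $(1,3)$ and $(3,1)$ entries should give $p^2=p$-type and $qr\,s=0$-type equations, and combined with $N^2=I$ (that is, $p^2+qr=1$, $(p+s)q=0$, $(p+s)r=0$, $s^2+qr=1$) they should force $p=s=0$ and $qr=1$. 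The alternative solutions of $N^2=I$, namely $N=\pm I$ or $p=-s\neq0$, must be ruled out using \eqref{eqs23} and \eqref{eqs24}. For example, $N=I$ would make \eqref{eqs24} read $\ell_2=\ell_1$, which fails. Finally I will verify that $N=\begin{pmatrix}0&q\\q^{-1}&0\end{pmatrix}$ actually satisfies \eqref{eqs23}--\eqref{eqs25} for every $w,x$. Relation \eqref{eqs25} should impose no new constraint, since $\tau$ has the same symmetric shape as the commutant of $\mu(\ell)$; I will check this by direct multiplication.

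The main obstacle is the case analysis for $N^2=I$ together with the braid relation: excluding the non-antidiagonal involutions, such as $p=-s$ with $p^2+qr=1$ and $p\neq0$. There I plan to use a specific entry of \eqref{eqs24}, likely the $(1,1)$ or $(2,2)$ entry, which involves $p$ multiplied by a power of $t$, to force $p=0$. If that does not suffice, I will use the $(2,2)$ entry of \eqref{eqs23}, which reads something like $ps\cdot p=\dots$. Since $q$ must then be a unit, it is of the form $\pm t^m$, and that is consistent with the statement's $1/q$.
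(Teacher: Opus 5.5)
Your plan is correct and is essentially the paper's argument: Theorem~\ref{th18} fixes $\mu''(\ell_i)$ and $\mu''(\tau_i)$, then $N^2=I$ together with relation~\eqref{eqs24} forces $p=s=0$ and $qr=1$, and the remaining relations \eqref{eqs23} and \eqref{eqs25} are checked to impose nothing further. In fact the $(1,1)$ and $(3,3)$ entries of \eqref{eqs24} read $p^2=0$ and $0=s^2$ outright, so the case analysis through the braid relation \eqref{eqs23} is unnecessary; your remark that $qr=1$ forces $q$ to be a unit $\pm t^m$ correctly makes explicit what the paper's notation $\frac{1}{q}$ leaves implicit.
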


\begin{proof}
Since $\mu''$ extends $\mu$, the images of the generators $\ell_i$ and $\tau_i$ must coincide with those obtained for $SLM_n$ in Theorem \ref{th18}. Hence, we have
\[
\mu''(\ell_i)=
\begin{pmatrix}
I_{i-1}&0&0\\
0&
\begin{pmatrix}
0&t^k\\
t^{-k}&0
\end{pmatrix}
&0\\
0&0&I_{n-i-1}
\end{pmatrix}, \qquad 1\leq i \leq n-1
\]
and
\[
\mu''(\tau_i)=
\begin{pmatrix}
I_{i-1}&0&0\\
0&
\begin{pmatrix}
w&x\\
t^{-2k}x&w
\end{pmatrix}
&0\\
0&0&I_{n-i-1}
\end{pmatrix}, \qquad 1\leq i \leq n-1,
\]
where $w,x\in\mathbb{Z}[t^{\pm1}]$. Also, since $\mu''$ is homogeneous $2$-local, the image of $\nu_i$ must also have a $2\times2$ block form. Hence we write
\[
\mu''(\nu_i)=
\begin{pmatrix}
I_{i-1}&0&0\\
0&
\begin{pmatrix}
p&q\\
r&s
\end{pmatrix}
&0\\
0&0&I_{n-i-1}
\end{pmatrix}, \qquad 1\leq i \leq n-1,
\]
where $p,q,r,s\in\mathbb{Z}[t^{\pm1}]$. Remark that in order to determine the possible values of $p, q, r$ and $s$, the only relations of $VSLM_n$ needed to be verified here are:
$$
\nu_1^2=1,\quad
\nu_1\nu_2\nu_1=\nu_2\nu_1\nu_2,\quad
\nu_1\ell_2\nu_1=\nu_2\ell_1\nu_2,\quad \text{and} \quad
\nu_1\tau_2\nu_1=\nu_2\tau_1\nu_2.
$$
First, consider the relation $\nu_1^2=1.$ Applying $\mu''$ gives
\[
\begin{pmatrix}
p&q\\
r&s
\end{pmatrix}^2=I_2,
\]
which yields
\begin{equation} \label{eqs36}
 p^2+qr=1,   
\end{equation}
\begin{equation}\label{eqs37}
 s^2+qr=1,   
\end{equation}
\begin{equation}\label{eqs38}
   (p+s)q=0,
\end{equation}
\begin{equation}\label{eqs39}
    (p+s)r=0.
\end{equation}
Next, consider the relation $
\nu_1\ell_2\nu_1=\nu_2\ell_1\nu_2 .$ Since the representation is homogeneous $2$-local, it suffices to examine the following $3\times3$ matrices. First, we write the matrices of the generators involved:
\[
\mu''(\ell_1)=
\begin{pmatrix}
0&t^k&0\\
t^{-k}&0&0\\
0&0&1
\end{pmatrix},
\qquad
\mu''(\ell_2)=
\begin{pmatrix}
1&0&0\\
0&0&t^k\\
0&t^{-k}&0
\end{pmatrix},
\]
\[
\mu''(\nu_1)=
\begin{pmatrix}
p&q&0\\
r&s&0\\
0&0&1
\end{pmatrix}, \quad \text{and}
\quad
\mu''(\nu_2)=
\begin{pmatrix}
1&0&0\\
0&p&q\\
0&r&s
\end{pmatrix}.
\]
We first compute
\[
\mu''(\nu_1)\mu''(\ell_2)\mu''(\nu_1)
=
\left(
\begin{array}{ccc}
 p^2 & p q & q t^k \\
 p r & q r & s t^k \\
 r t^{-k} & s t^{-k} & 0 \\
\end{array}
\right).
\]
Second, we compute
\[
\mu''(\nu_2)\mu''(\ell_1)\mu''(\nu_1)
=
\left(
\begin{array}{ccc}
 0 & p t^k & q t^k \\
 p t^{-k} & q r & q s \\
 r t^{-k} & r s & s^2 \\
\end{array}
\right).
\]
Comparing both sides directly gives $p=s=0$. Hence, by Equations \eqref{eqs36} and \eqref{eqs37}, we get $r=\dfrac{1}{q}$. 
Therefore
\[
\mu''(\nu_i)=
\begin{pmatrix}
I_{i-1}&0&0\\
0&
\begin{pmatrix}
0&q\\
\dfrac{1}{q}&0
\end{pmatrix}
&0\\
0&0&I_{n-i-1}
\end{pmatrix}, \qquad 1\leq i \leq n-1.
\]
Finally, one verifies that the remaining relations
\[
\nu_1\nu_2\nu_1=\nu_2\nu_1\nu_2
\quad\text{and}\quad
\nu_1\tau_2\nu_1=\nu_2\tau_1\nu_2
\]
are preserved under $\mu''$ with these matrices and do not impose further conditions. Hence, the form of $\mu''(\nu_i)$ is uniquely determined, which completes the proof.
\end{proof}

\subsection{The $\Phi$-Type Extension of $\mu$ to $SLM_n$ and $VSLM_n$}

In this subsection, we determine all $\Phi$-type extensions of the representation $\mu$ of $L_n$ to $SLM_n$ and $VSLM_n$ for all $n\geq 3$. We begin with the case of $SLM_n$.

\begin{proposition} \label{prop123}
Consider the representation $\mu: L_n \longrightarrow \mathrm{GL}_n(\mathbb{Z}[t^{\pm 1}])$ given in Definition~\ref{defmu}. The $\Phi$-type extension of $\mu$ to $SLM_n$ is the representation $
\Phi_1:SLM_n \longrightarrow \mathrm{M}_n(\mathbb{Z}[t^{\pm 1}])$
given by
\[
\Phi_1(\ell_i)=
\begin{pmatrix}
I_{i-1} & 0 & 0 \\
0 &
\begin{pmatrix}
0 & t^k \\
t^{-k} & 0
\end{pmatrix}
& 0 \\
0 & 0 & I_{n-i-1}
\end{pmatrix}, \qquad 1\leq i \leq n-1
\]
and
\[
\Phi_1(\tau_i)=
\begin{pmatrix}
(a+b) I_{i-1} & 0 & 0 \\
0 &
\begin{pmatrix}
b & a t^k \\
a t^{-k} & b
\end{pmatrix}
& 0 \\
0 & 0 & (a+b) I_{n-i-1}
\end{pmatrix}, \qquad 1\leq i \leq n-1,
\]
where $a,b \in \mathbb{Z}[t^{\pm 1}]$.
\end{proposition}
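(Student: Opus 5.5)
The plan is to apply Proposition~\ref{phitype} directly to $\lambda=\mu$ and then compute the resulting matrices explicitly.

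First, check the hypothesis of Proposition~\ref{phitype}: $\mu(\ell_i)\mu(\ell_j)=\mu(\ell_j)\mu(\ell_i)$ for $|i-j|\geq 2$. This holds because $\mu$ is $2$-local. For $|i-j|\geq 2$ the nontrivial $2\times 2$ blocks of $\mu(\ell_i)$ and $\mu(\ell_j)$ occupy disjoint diagonal positions $\{i,i+1\}$ and $\{j,j+1\}$. Both matrices are therefore block diagonal with respect to a common decomposition, and in each block at most one of them differs from the identity, so they commute. One could also simply cite the corollary at the end of Section~\ref{rep-extensions}, since $\mu$ is a $2$-local representation of $L_n$. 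Proposition~\ref{phitype} then says that $\Phi_1(\ell_i)=\mu(\ell_i)$ and $\Phi_1(\tau_i)=a\mu(\ell_i)+bI_n$ define a representation of $SLM_n$ extending $\mu$. This already gives the stated formula for $\Phi_1(\ell_i)$.

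It remains to compute $a\mu(\ell_i)+bI_n$ block by block, using the decomposition $I_{i-1}\oplus(2\times 2)\oplus I_{n-i-1}$.
\begin{itemize}
\item On the two identity blocks, $aI+bI=(a+b)I$.
\item On the middle block, $a\begin{pmatrix}0&t^k\\ t^{-k}&0\end{pmatrix}+b\begin{pmatrix}1&0\\0&1\end{pmatrix}=\begin{pmatrix}b&at^k\\ at^{-k}&b\end{pmatrix}$.
\item The off-diagonal zero blocks remain zero.
\end{itemize}
This is exactly the displayed $\Phi_1(\tau_i)$.

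There is no real obstacle here. The only point needing care is the interpretation of ``the'' $\Phi$-type extension. It should be read as the family parametrized by $a,b\in\mathbb{Z}[t^{\pm1}]$, with the form of $\Phi$ fixed by Proposition~\ref{phitype}, so no uniqueness argument is required beyond that definition. One should also note that the result is not $2$-local unless $a+b=1$. This foreshadows the later comparison with Theorem~\ref{th18}, where matching would require $w=b$, $x=at^k$, and $a+b=1$.
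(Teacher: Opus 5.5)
Your proposal is correct and follows the paper's own proof: specialize Proposition~\ref{phitype} to $\lambda=\mu$ and read off $a\mu(\ell_i)+bI_n$ block by block. The only addition is your explicit check of the far-commutativity hypothesis via $2$-locality. The paper leaves that check implicit; it is the content of the corollary closing Section~\ref{rep-extensions}.
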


\begin{proof}
The result follows by direct computation using the construction of $\Phi_1$ given in Proposition \ref{phitype}. Indeed, we have, for all $1\leq i \leq n-1,$
\[
\Phi_1(\ell_i)=\mu(\ell_i),
\]
and
\[
\Phi_1(\tau_i)=a\,\mu(\ell_i)+bI_n,
\]
where $a,b \in \mathbb{Z}[t^{\pm 1}]$, from which the matrices above are obtained.
\end{proof}

\begin{corollary}
The homogeneous $2$-local type extension of $\mu$, denoted by $\mu'$ in Theorem~\ref{th18}, and the $\Phi$-type extension of $\mu$, denoted by $\Phi_1$ in Proposition~\ref{prop123}, coincide as representations of $SLM_n$ whenever the parameters in $\mathbb{Z}[t^{\pm 1}]$ satisfy
\[
a+b=1, \quad b=w, \quad \text{and} \quad  x=a t^k .
\]
\end{corollary}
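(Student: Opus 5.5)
Since both $\mu'$ and $\Phi_1$ are monoid homomorphisms out of $SLM_n$, they coincide as soon as they agree on the generators $\ell_i$ and $\tau_i$, $1\le i\le n-1$. The plan is to check this generator by generator and read off the stated parameter conditions.

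For the nonsingular generators nothing needs to be done. By Theorem~\ref{th18} and Proposition~\ref{prop123},
\[
\mu'(\ell_i)=\mu(\ell_i)=\Phi_1(\ell_i)
\]
for every $i$.

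For the singular generators we compare $\mu'(\tau_i)$ with $\Phi_1(\tau_i)$ block by block. Outside the $2\times 2$ block in rows and columns $i,i+1$, the matrix $\mu'(\tau_i)$ is the identity, while $\Phi_1(\tau_i)$ has the scalar $(a+b)$ on the diagonal. For $n\ge 3$ at least one of $I_{i-1}$, $I_{n-i-1}$ is nonempty for every $i$, so agreement forces $a+b=1$. Conversely, $a+b=1$ makes these blocks agree.

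Inside the central block we must match
\[
\begin{pmatrix} w & x\\ t^{-2k}x & w\end{pmatrix}
\quad\text{with}\quad
\begin{pmatrix} b & at^k\\ at^{-k} & b\end{pmatrix}.
\]
Comparing the diagonal entries gives $w=b$. Comparing the $(1,2)$ entries gives $x=at^k$. The $(2,1)$ entries then agree automatically, since $t^{-2k}x=t^{-2k}\cdot at^k=at^{-k}$. So under $a+b=1$, $b=w$, $x=at^k$ the images of every generator agree, and hence the two representations coincide. (These conditions are also necessary, by the same entrywise comparison.)

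The only step needing any care is the consistency of the off-diagonal entries: the $2$-local form from Theorem~\ref{th18} fixes the ratio of the $(2,1)$ to $(1,2)$ entry as $t^{-2k}$, and the $\Phi$-type matrix has ratio $at^{-k}/(at^k)=t^{-2k}$ as well. This is exactly what lets a single condition $x=at^k$ suffice. Everything else is direct entrywise comparison.
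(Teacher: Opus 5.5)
Your proposal is correct and is essentially the paper's argument: the paper just says the result follows by directly comparing $\mu'$ and $\Phi_1$, and you carry out that comparison explicitly on the generators, including the check that the $(2,1)$ entries $t^{-2k}x$ and $at^{-k}$ agree once $x=at^k$. Your extra remark on necessity is also valid, since for $n\ge 3$ an identity block outside the central $2\times 2$ block is always present, though the statement only asks for sufficiency.
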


\begin{proof}
    The proof follows directly by comparing the representations $\mu'$ and $\Phi_1$.
\end{proof}

We now find all $\Phi$-type extensions of $\mu$ to $VSLM_n$.

\begin{proposition} \label{prop456}
Consider the representation $\mu: L_n \longrightarrow \mathrm{GL}_n(\mathbb{Z}[t^{\pm 1}])$ given in Definition~\ref{defmu}. The $\Phi$-type extension of $\mu$ to $VSLM_n$ is the representation $
\Phi_2:VSLM_n \longrightarrow \mathrm{M}_n(\mathbb{Z}[t^{\pm 1}])$
given by,
\[
\Phi_2(\ell_i)=
\begin{pmatrix}
I_{i-1} & 0 & 0 \\
0 &
\begin{pmatrix}
0 & t^k \\
t^{-k} & 0
\end{pmatrix}
& 0 \\
0 & 0 & I_{n-i-1}
\end{pmatrix}, \qquad 1\leq i \leq n-1,
\]
\[
\Phi_2(\tau_i)=
\begin{pmatrix}
(a+b) I_{i-1} & 0 & 0 \\
0 &
\begin{pmatrix}
b & a t^k \\
a t^{-k} & b
\end{pmatrix}
& 0 \\
0 & 0 & (a+b) I_{n-i-1}
\end{pmatrix}, \qquad 1\leq i \leq n-1,
\]
and 
\[
\Phi_2(\nu_i)=
\begin{pmatrix}
-I_{i-1} & 0 & 0 \\
0 &
\begin{pmatrix}
0 & -t^k \\
-t^{-k} & 0
\end{pmatrix}
& 0 \\
0 & 0 & -I_{n-i-1}
\end{pmatrix}, \qquad 1\leq i \leq n-1,
\]
where $a,b \in \mathbb{Z}[t^{\pm 1}]$.
\end{proposition}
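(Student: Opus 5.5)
The plan is to apply Proposition~\ref{phitype222} directly to $\lambda=\mu$, in parallel with the proof of Proposition~\ref{prop123}. The first step is to check the hypothesis of Proposition~\ref{phitype222}: the images $\mu(\ell_i)\mu(\ell_j)=\mu(\ell_j)\mu(\ell_i)$ must commute whenever $|i-j|\geq 2$. This holds because $\mu$ is $2$-local by Definition~\ref{defmu}, and the nontrivial $2\times 2$ blocks of $\mu(\ell_i)$ and $\mu(\ell_j)$ then sit in disjoint diagonal positions (rows and columns $\{i,i+1\}$ and $\{j,j+1\}$). This is exactly the argument of the corollary at the end of Section~\ref{rep-extensions}. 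Proposition~\ref{phitype222} then guarantees that
\[
\Phi_2(\ell_i)=\mu(\ell_i),\qquad \Phi_2(\tau_i)=a\,\mu(\ell_i)+bI_n,\qquad \Phi_2(\nu_i)=-\mu(\ell_i)
\]
is a representation of $VSLM_n$ extending $\mu$, for any $a,b\in\mathbb{Z}[t^{\pm1}]$.

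The second step is to compute these matrices blockwise. The image $\Phi_2(\ell_i)$ is $\mu(\ell_i)$ as given. For $\Phi_2(\tau_i)$, multiplying $\mu(\ell_i)$ by $a$ gives $aI_{i-1}$ and $aI_{n-i-1}$ on the outer diagonal blocks and the middle block $\begin{pmatrix}0&at^k\\ at^{-k}&0\end{pmatrix}$. Adding $bI_n$ turns the outer blocks into $(a+b)I$ and the middle block into $\begin{pmatrix}b&at^k\\ at^{-k}&b\end{pmatrix}$, which agrees with the $\tau_i$ image of Proposition~\ref{prop123}. For $\Phi_2(\nu_i)$, negating $\mu(\ell_i)$ gives $-I_{i-1}$, $-I_{n-i-1}$, and the middle block $\begin{pmatrix}0&-t^k\\ -t^{-k}&0\end{pmatrix}$.

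No step here is a genuine obstacle. The only point needing care is that the identity blocks are scaled or negated too, so $\Phi_2(\tau_i)$ and $\Phi_2(\nu_i)$ are not $2$-local. This is consistent with the later comparison showing that this extension differs from the homogeneous $2$-local one $\mu''$ of Theorem~\ref{th19}.
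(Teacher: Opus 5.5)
Your proposal is correct and follows the same route as the paper: specialize Proposition~\ref{phitype222} to $\lambda=\mu$ and read off the blocks of $\mu(\ell_i)$, $a\,\mu(\ell_i)+bI_n$, and $-\mu(\ell_i)$. Unlike the paper's proof, which leaves the commutation hypothesis implicit (it is the content of the corollary ending Section~\ref{rep-extensions}), you check it explicitly via $2$-locality. One small correction to your closing aside: $\Phi_2(\tau_i)$ fails to be $2$-local only when $a+b\neq 1$, and for $a+b=1$ it is $2$-local, which is exactly the case used in the comparison corollary for $SLM_n$.
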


\begin{proof}
The result follows by direct computation using the construction of $\Phi_2$ given in Proposition~\ref{phitype222}. Indeed, we have, for all $1\leq i \leq n-1,$
$$
\Phi_2(\ell_i)=\mu(\ell_i),
$$
\[
\Phi_2(\tau_i)=a\,\mu(\ell_i)+bI_n,
\]
and
\[
\Phi_2(\nu_i)=-\mu(\ell_i),
\]
where $a,b \in \mathbb{Z}[t^{\pm 1}]$, from which the matrices above are obtained.
\end{proof}

\begin{corollary}
The homogeneous $2$-local type extension of $\mu$, denoted by $\mu''$ in Theorem~\ref{th19}, and the $\Phi$-type extension of $\mu$, denoted by $\Phi_2$ in Proposition~\ref{prop456}, do not coincide as representations of $VSLM_n$, regardless of the choice of parameters in $\mathbb{Z}[t^{\pm 1}]$.
\end{corollary}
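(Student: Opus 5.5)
To show that $\mu''$ and $\Phi_2$ never coincide, it is enough to find one generator of $VSLM_n$ on which the two representations differ for every admissible choice of $w,x,q$ (from Theorem~\ref{th19}) and $a,b$ (from Proposition~\ref{prop456}). Both agree on the $\ell_i$, and on the $\tau_i$ they can agree exactly as in the preceding corollary for $SLM_n$. The natural place to look is therefore the virtual generators $\nu_i$, which I will compare entrywise.

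The argument is a direct comparison of matrices. Assume, for contradiction, that $\mu''(\nu_i)=\Phi_2(\nu_i)$ for some $i$ and some choice of parameters. Since $n\ge 3$, the identity block $I_{i-1}$ or $I_{n-i-1}$ is nonempty for every $i$. So $\mu''(\nu_i)$ has a diagonal entry equal to $1$ outside the $2\times 2$ block, while $\Phi_2(\nu_i)$ has $-1$ in that same position. For $i=1$ this is the entry $(3,3)$; for $i\geq 2$ it is the entry $(1,1)$. These entries do not depend on $q,a,b$, and $1\neq -1$ in $\mathbb{Z}[t^{\pm1}]$, which gives the contradiction. Hence $\mu''(\nu_i)\neq\Phi_2(\nu_i)$ for every $i$, and so $\mu''\neq\Phi_2$ as representations, whatever the parameters.

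As a complementary check, I would also note why no reparametrization can rescue the comparison. Inside the $2\times2$ block, matching would force $q=-t^k$ and $1/q=-t^{-k}$, which is consistent. So the obstruction really lies in the scalar $-1$ that $\Phi_2(\nu_i)=-\mu(\ell_i)$ carries outside the block. That scalar is incompatible with the $2$-local shape required of $\mu''$, so $\Phi_2$ is not even $2$-local.

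There is no real obstacle in this argument. The one point needing care is the hypothesis $n\ge 3$ from the subsection, which guarantees that an identity block outside the $2\times 2$ block exists. For $n=2$ the matrices would be only the $2\times2$ blocks, and they could coincide with $q=-t^k$, $w=b$, $x=at^k$.
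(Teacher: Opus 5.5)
Your proof is correct and takes the same approach as the paper. The paper also separates $\mu''$ from $\Phi_2$ through the images of the virtual generators $\nu_i$, but it only asserts that $\mu''(\nu_i)\neq\Phi_2(\nu_i)$ for every $q$. You make the reason explicit: the diagonal entry outside the $2\times 2$ block is $1$ for $\mu''(\nu_i)$ and $-1$ for $\Phi_2(\nu_i)$, such an entry exists because $n\ge 3$, and inside the block the choice $q=-t^k$ would in fact match.
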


\begin{proof}
Since $\mu''(\nu_i) \neq \Phi_2(\nu_i)$ for all $q \in \mathbb{Z}[t^{\pm 1}]$, the two representations cannot coincide.
\end{proof}

We conclude this section with the following question.

\begin{question}
Is it possible to construct a new type of extension of representations of $L_n$ to $SLM_n$ and $VSLM_n$ without imposing any restrictions on $L_n$?
\end{question}

\section{Future Ideas}

We present several directions for future research related to the results obtained in this paper, in addition to the questions raised earlier in Sections~\ref{rep-extensions} and~\ref{rep-extension_mu}.

\begin{itemize}
    \item[(1)] What is the general relationship between the $k$-local type extensions and the $\Phi$-type extensions of representations of $L_n$ introduced in this paper?
    
    \item[(2)] Can these two types of extensions be generalized to other algebraic structures related to $L_n$ and its extensions?
    
    \item[(3)] Since the representation $\mu$ was shown to be reducible in \cite{Nasser2026}, what can be said about its extensions under the two types considered in this paper? In particular, are these extensions also reducible?
\end{itemize}

These questions suggest several promising directions for further investigation and may contribute to a deeper understanding of representations of $L_n$ and its associated algebraic structures.

\textbf{Acknowledgments.} C. Caprau was partially suported by NSF-RUI grant DMS 2204386.

\end{document}